\newenvironment{customthm}[1]
  {\innercustomthm}
  {\endinnercustomthm}
\newtheorem*{thm*}{Theorem}
\newtheorem{thm}{Theorem}
\newtheorem{pro}[thm]{Proposition}
\newtheorem{cor}[thm]{Corollary}
\newtheorem{ques}[thm]{Question}
\newcommand{\N}{\mathbb{N}}
\newcommand{\HH}{\mathcal{H}}
\begin{document}

\title{A Deletion-Contraction Relation for the DP Color Function}

\author{Jeffrey A. Mudrock$^1$}

\footnotetext[1]{Department of Mathematics, College of Lake County, Grayslake, IL 60030.  E-mail:  {\tt {jmudrock@clcillinois.edu}}}

\maketitle

\begin{abstract}

DP-coloring is a generalization of list coloring that was introduced in 2015 by Dvo\v{r}\'{a}k and Postle.  The chromatic polynomial of a graph $G$, denoted $P(G,m)$, is equal to the number of proper $m$-colorings of $G$.  A well-known tool for computing the chromatic polynomial of graph $G$ is the deletion-contraction formula which relates $P(G,m)$ to the chromatic polynomials of two smaller graphs.    The DP color function of a graph $G$, denoted $P_{DP}(G,m)$, is a DP-coloring analogue of the chromatic polynomial, and $P_{DP}(G,m)$ is the minimum number of DP-colorings of $G$ over all possible $m$-fold covers.  In this paper we present a deletion-contraction relation for the DP color function.  To make this possible, we extend the definition of the DP color function to multigraphs.  We also introduce the dual DP color function of a graph $G$, denoted $P^*_{DP}(G,m)$, which counts the maximum number of DP-colorings of $G$ over certain $m$-fold covers.  We show how the dual DP color function along with our deletion-contraction relation yields a new general lower bound on the DP color function of a graph.

\medskip

\noindent {\bf Keywords.}  graph coloring, list coloring, DP-coloring, chromatic polynomial, DP color function

\noindent \textbf{Mathematics Subject Classification.} 05C15, 05C30, 05C69

\end{abstract}

\section{Introduction}\label{intro}

In this paper all graphs are nonempty, finite, undirected loopless multigraphs.  For the purposes of this paper, a simple graph is a multigraph without any parallel edges between vertices.  Generally speaking we follow West~\cite{W01} for terminology and notation.  The set of natural numbers is $\N = \{1,2,3, \ldots \}$.  For $m \in \N$, we write $[m]$ for the set $\{1, \ldots, m \}$.  If $G$ is a graph and $S, U \subseteq V(G)$, we use $G[S]$ for the subgraph of $G$ induced by $S$, and we use $E_G(S, U)$ for the set consisting of all the edges in $E(G)$ that have both endpoints in $S \cup U$, at least one endpoint in $S$, and at least one endpoint in $U$.  Additionally, $N_G(S)$ is the set of all vertices in $V(G)$ that are adjacent in $G$ to at least one vertex in $S$.  When $e \in E(G)$, $G-e$ denotes the graph obtained from $G$ by deleting edge $e$, and $G \cdot e$ denotes the graph obtained from $G$ by contracting the edge $e$ which means that edge $e$ is deleted, the endpoints of $e$ are identified as the same vertex, and any loops formed by identifying the endpoints of $e$ as the same vertex are deleted.  Also, when $E \subseteq E(G)$, $G-E$ denotes the graph obtained from $G$ by deleting each edge in $E$.  When $u,v \in V(G)$ we use $E_G(u,v)$ to denote the set of edges in $E(G)$ with endpoints $u$ and $v$ (note $E_G(u,v) = E_G(v,u)$), and we let $e_G(u,v)$ denote the number of elements in $E_G(u,v)$.  If $u$ has one neighbor in $V(G)$, we say that $u$ is a \emph{pendant vertex}.  When $G$ is a multigraph, the \emph{underlying graph of $G$} is the simple graph formed by deleting all parallel edges of $G$.  When $G$ is a simple graph, we can refer to edges by their endpoints; for example, if $u$ and $v$ are adjacent in the simple graph $G$, $uv$ or $vu$ refers to the edge between $u$ and $v$.

\subsection{List Coloring and DP-Coloring}

In classical vertex coloring we wish to color the vertices of a graph $G$ with up to $m$ colors from $[m]$ so that adjacent vertices receive different colors, a so-called \emph{proper $m$-coloring}.  List coloring is a well-known variation on classical vertex coloring that was introduced independently by Vizing~\cite{V76} and Erd\H{o}s, Rubin, and Taylor~\cite{ET79} in the 1970s.  For list coloring, we associate a \emph{list assignment} $L$ with a graph $G$ such that each vertex $v \in V(G)$ is assigned a list of colors $L(v)$ (we say $L$ is a list assignment for $G$).  Then, $G$ is \emph{$L$-colorable} if there exists a proper coloring $f$ of $G$ such that $f(v) \in L(v)$ for each $v \in V(G)$ (we refer to $f$ as a \emph{proper $L$-coloring} of $G$).  A list assignment $L$ is called an \emph{$m$-assignment} for $G$ if $|L(v)|=m$ for each $v \in V(G)$.  We say $G$ is \emph{$m$-choosable} if $G$ is $L$-colorable whenever $L$ is an $m$-assignment for $G$.  Note that if $G$ is $m$-choosable, then a proper $m$-coloring for $G$ exists since the list assignment for $G$ that assigns $[m]$ to each element in $V(G)$ is an $m$-assignment for $G$.  

In 2015, Dvo\v{r}\'{a}k and Postle~\cite{DP15} introduced a generalization of list coloring called DP-coloring (they called it correspondence coloring) in order to prove that every planar graph without cycles of lengths 4 to 8 is 3-choosable. DP-coloring has been extensively studied over the past 6 years (see e.g.,~\cite{B17, BK182, KM20, KO18, LL19, LLYY19, Mo18, M18}). Intuitively, DP-coloring is a variation on list coloring where each vertex in the graph still gets a list of colors, but identification of which colors are different can change from edge to edge.  Due to this property, DP-coloring multigraphs is not as simple as coloring the corresponding underlying graph (see~\cite{BKP17}).  We now give the formal definition.  Suppose $G$ is a multigraph.  A \emph{cover} of $G$ is a triple $\mathcal{H} = (L,H,M)$ where $L$ is a function that assigns to each element of $V(G)$ a nonempty finite set, $H$ is a multigraph with vertex set $\bigcup_{v \in V(G)} L(v)$, and $M$ is a function that assigns to each $e \in E(G)$ a matching $M(e)$ with the property that each edge in $M(e)$ has one endpoint in $L(u)$ and one endpoint in $L(v)$ where $u$ and $v$ are the endpoints of $e$.  Moreover, $L$, $H$, and $M$ satisfy the following conditions~\footnote{When we construct a cover, we will often omit the definition of $M$ when $G$ is simple since its definition will be obvious in such cases.}: 

\vspace{5mm}

\noindent (1) For distinct vertices $u,v \in V(G)$, $L(u) \cap L(v) = \emptyset$; \\
(2) For every $u \in V(G)$, $H[L(u)]=K_{|L(u)|}$; \\
(3)  For distinct edges $e_1, e_2 \in E(G)$, $M(e_1) \cap M(e_2) = \emptyset$; \\
(4)  For distinct vertices $u, v \in V(G)$, the set of edges between $L(u)$ and $L(v)$ in $H$ is $\bigcup_{e \in E_G(u,v)} M(e)$. 

\vspace{5mm}

Note that by conditions (3) and (4) in the above definition $H$ may contain parallel edges.  Furthermore, note that if $G$ is a simple graph, $H$ must be simple.  

Suppose $\mathcal{H} = (L,H,M)$ is a cover of $G$.  An \emph{$\mathcal{H}$-coloring} of $G$ is an independent set in $H$ of size $|V(G)|$.  It is immediately clear that an independent set $I \subseteq V(H)$ is an $\mathcal{H}$-coloring of $G$ if and only if $|I \cap L(u)|=1$ for each $u \in V(G)$.  We say $\mathcal{H}$ is \emph{$m$-fold} if $|L(u)|=m$ for each $u \in V(G)$.  Moreover, we say that $\mathcal{H}$ is a \emph{full $m$-fold cover} of $G$ if $|E_H(L(u),L(v))|= e_G(u,v) m$ whenever $u$ and $v$ are distinct vertices of $G$.  It is worth noting that given an $m$-assignment $L$ for a graph $G$, it is easy to construct an $m$-fold cover $\mathcal{H}'$ of $G$ such that $G$ has an $\mathcal{H}'$-coloring if and only if $G$ has a proper $L$-coloring.   

Suppose $\mathcal{H} = (L,H,M)$ is an $m$-fold cover of a multigraph $G$.  Suppose that $H'$ is the underlying graph of $H$.  We say that $\mathcal{H}$ has a \emph{canonical labeling} if it is possible to name~\footnote{When $\mathcal{H}=(L,H,M)$ has a canonical labeling, we will always refer to the vertices of $H$ using this naming scheme.} the vertices of $H$ as $L(u) = \{ (u,j) : j \in [m] \}$ for each $u \in V(G)$ so that whenever $e_G(u,v) \geq 1$, $(u,j)(v,j) \in E(H')$ for each $j \in [m]$ and $|E_{H'}(L(u),L(v))|=m$.  Suppose $\mathcal{H}$ has a canonical labeling and $G$ has a proper $m$-coloring.  Then, if $\mathcal{I}$ is the set of $\mathcal{H}$-colorings of $G$ and $\mathcal{C}$ is the set of proper $m$-colorings of $G$, the function $f: \mathcal{C} \rightarrow \mathcal{I}$ given by $f(c) = \{ (v, c(v)) : v \in V(G) \}$ is a bijection.  So, finding an $\HH$-coloring of $G$ is equivalent to finding a proper $m$-coloring of $G$. 
 
\subsection{Counting Proper Colorings, List Colorings, and DP-Colorings}

In 1912 Birkhoff introduced the notion of the chromatic polynomial in hopes of using it to make progress on the four color problem.  For $m \in \N$, the \emph{chromatic polynomial} of a graph $G$, $P(G,m)$, is the number of proper $m$-colorings of $G$.  It can be shown that $P(G,m)$ is a polynomial in $m$ of degree $|V(G)|$ (see~\cite{B12}).  For example, $P(K_n,m) = \prod_{i=0}^{n-1} (m-i)$ and $P(T,m) = m(m-1)^{n-1}$ whenever $T$ is a tree on $n$ vertices (see~\cite{W01}).  Importantly, in 1940 the deletion-contraction formula for chromatic polynomials~\footnote{From this point forward, for the sake of brevity, we will refer to this as the Deletion-Contraction Formula.} was introduced which is an important tool for recursively finding the chromatic polynomial of a graph.

\begin{thm} [Deletion-Contraction Formula~\cite{BS40}] \label{thm: chromaticpoly}
If $G$ is a simple graph and $e \in E(G)$, then for each $m \in \N$, $P(G,m) = P(G-e,m) - P(G \cdot e, m)$. 
\end{thm}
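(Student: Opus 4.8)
The plan is to prove the Deletion-Contraction Formula by establishing a bijection-style counting argument that partitions the proper $m$-colorings of $G-e$ according to whether the two endpoints of $e$ receive the same color or different colors. Write $e = uv$ where $u,v \in V(G)$. The key observation is that since $G-e$ and $G$ differ only in the edge $e$, every proper $m$-coloring of $G$ is automatically a proper $m$-coloring of $G-e$, and conversely a proper $m$-coloring of $G-e$ fails to be a proper coloring of $G$ precisely when it assigns the same color to $u$ and $v$.

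First I would partition the set $\mathcal{C}(G-e)$ of proper $m$-colorings of $G-e$ into two disjoint classes: the set $A$ of colorings $f$ with $f(u) \neq f(v)$, and the set $B$ of colorings $f$ with $f(u) = f(v)$. By the observation above, $A$ is exactly the set of proper $m$-colorings of $G$, so $|A| = P(G,m)$. The next step is to show that $B$ is in bijection with the set of proper $m$-colorings of $G \cdot e$. Here I would use the natural correspondence: a coloring $f \in B$ assigns a common color to $u$ and $v$, and since $G \cdot e$ is obtained by identifying $u$ and $v$ into a single vertex $w$ (and deleting any resulting loops and redundant parallel edges in the simple-graph setting), the assignment that gives $w$ the common color $f(u)=f(v)$ and agrees with $f$ on all other vertices is a well-defined proper coloring of $G \cdot e$; this map is easily inverted, giving $|B| = P(G \cdot e, m)$.

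Combining these, since $\mathcal{C}(G-e) = A \sqcup B$ is a disjoint union, we get $P(G-e,m) = |A| + |B| = P(G,m) + P(G \cdot e, m)$, and rearranging yields the stated identity $P(G,m) = P(G-e,m) - P(G \cdot e, m)$.

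The step requiring the most care is verifying that the correspondence between $B$ and the proper colorings of $G \cdot e$ is genuinely a bijection onto \emph{proper} colorings, which is where the simple-graph hypothesis matters. When we contract $e$, the identified vertex $w$ may inherit parallel edges to a common neighbor $x$ of $u$ and $v$; these collapse to a single edge in the simple graph $G \cdot e$, but this causes no problem because the single constraint $f(w) \neq f(x)$ is equivalent to the pair of constraints $f(u) \neq f(x)$ and $f(v) \neq f(x)$ that were already present in $G-e$ once $f(u) = f(v)$. Likewise any loop created at $w$ is deleted and imposes no constraint, consistent with the fact that $u$ and $v$ already share the color in $B$. I would therefore check explicitly that a coloring in $B$ satisfies every edge constraint of $G \cdot e$ and that its image is proper, and that the inverse map (lifting a proper coloring of $G \cdot e$ by assigning $f(u) = f(v) = $ the color of $w$) lands in $B$ and is proper for $G-e$. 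Once this equivalence of constraints is confirmed, the bijection and hence the formula follow.
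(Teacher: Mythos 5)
Your proof is correct and is the standard argument; the paper does not actually prove Theorem~\ref{thm: chromaticpoly} (it is cited to~\cite{BS40}), but your partition of the proper $m$-colorings of $G-e$ according to whether $u$ and $v$ share a color, together with the bijection between the ``same color'' class and the proper colorings of $G \cdot e$, is exactly the classical proof and is also the template the author follows for the DP analogue in Theorem~\ref{thm: deletioncontraction}. The only cosmetic difference is that the paper's definition of contraction retains parallel edges (so $G \cdot e$ may be a multigraph), whereas you suppress them; this is immaterial here since parallel edges impose the same constraint on a proper coloring and do not change $P(G \cdot e, m)$.
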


One easy application of the Deletion-Contraction Formula is that it can be used along with induction and the chromatic polynomial formulas for trees and $K_3$ to prove that $P(C_n,m) = (m-1)^n + (-1)^n (m-1)$ whenever $n \geq 3$.  It is worth noting that this formula for $P(C_n,m)$ still works when $n=2$ since the proper $m$-colorings of $C_2$ directly correspond to the proper $m$-colorings of $K_2$.

The notion of chromatic polynomial was extended to list coloring in the 1990s~\cite{AS90}.   In particular, if $L$ is a list assignment for $G$, we use $P(G,L)$ to denote the number of proper $L$-colorings of $G$. The \emph{list color function} $P_\ell(G,m)$ is the minimum value of $P(G,L)$ where the minimum is taken over all $m$-assignments $L$ for $G$.  It is clear that $P_\ell(G,m) \leq P(G,m)$ for each $m \in \N$ since we must consider the $m$-assignment that assigns $[m]$ to each vertex of $G$ when considering all possible $m$-assignments for $G$.  In general, the list color function can differ significantly from the chromatic polynomial for small values of $m$.  However, for large values of $m$, Wang, Qian, and Yan~\cite{WQ17} (improving upon results in~\cite{D92} and~\cite{T09}) showed that if $G$ is a connected graph with $l$ edges, then $P_{\ell}(G,m)=P(G,m)$ whenever $m > (l-1)/\ln(1+ \sqrt{2})$.

Notice that Wang, Qian, and Yan's result implies that the Deletion-Contraction Formula holds when the chromatic polynomial is replaced with the list color function and $m$ is sufficiently large.  However, for small $m \in \N$ there exists $G$ such that $P_\ell(G,m) \neq P_\ell(G-e,m) - P_\ell(G \cdot e, m)$.  For example, if $G = K_{2,4}$ and $e \in E(G)$, then it is easy to see $P_\ell(G,2)=0$.  However, $P_\ell(G-e,m) - P_\ell(G \cdot e, m) = 2-0 = 2$.

In 2019, Kaul and Mudrock introduced a DP-coloring analogue of the chromatic polynomial called the DP color function in hopes of gaining a better understanding of DP-coloring and using it as a tool for making progress on some open questions related to the list color function~\cite{KM19}.  Since its introduction in 2019, the DP color function has received some attention in the literature (see e.g.,~\cite{BH21, DY21, HK21, KM21, MT20}). 

The motivation for this paper is to find an analogue of the Deletion-Contraction Formula for the DP color function.  In order to do this, we need to extend the original definition of the DP color function to multigraphs.  Specifically, suppose $\mathcal{H} = (L,H,M)$ is a cover of a multigraph $G$.  Let $P_{DP}(G, \mathcal{H})$ be the number of $\mathcal{H}$-colorings of $G$.  Then, the \emph{DP color function} of $G$, $P_{DP}(G,m)$, is the minimum value of $P_{DP}(G, \mathcal{H})$ where the minimum is taken over all full $m$-fold covers $\mathcal{H}$ of $G$.~\footnote{We take $\N$ to be the domain of the DP color function of any multigraph.}  It is easy to show that for any $m \in \N$, $P_{DP}(G, m) \leq P_\ell(G,m) \leq P(G,m)$.  Note that if $G$ is a disconnected graph with components: $H_1, H_2, \ldots, H_t$, then $P_{DP}(G, m) = \prod_{i=1}^t P_{DP}(H_i,m)$.  So, we will assume that any graph $G$ introduced from this point forward is connected unless otherwise noted.

Unlike the list color function, it is well-known that $P_{DP}(G,m)$ does not necessarily equal $P(G,m)$ for sufficiently large $m$.  Indeed Dong and Yang~\cite{DY21} recently generalized a result of Kaul and Mudrock~\cite{KM19} and showed that if $G$ is a simple graph that contains an edge $e$ such that the length of a shortest cycle containing $e$ is even, then there exists an $N \in \N$ such that $P_{DP}(G,m) < P(G,m)$ whenever $m \geq N$.  A related result that will be important for this paper was recently proven by Mudrock and Thomason.

\begin{thm} [\cite{MT20}] \label{thm: general}
Suppose $g$ is an odd integer with $g \geq 3$.  If $G$ is a simple graph on $n$ vertices with girth at least $g$, then $P(G,m) - P_{DP}(G,m) = O(m^{n-g})$ as $m \rightarrow \infty$.   
\end{thm}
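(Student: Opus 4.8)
The plan is to bound $P(G,m) - P_{DP}(G,\HH)$ from above by $O(m^{n-g})$ \emph{uniformly} over all full $m$-fold covers $\HH$ of $G$; since $P_{DP}(G,m)$ is the minimum of $P_{DP}(G,\HH)$ over such covers and $P_{DP}(G,m) \le P(G,m)$, this at once yields $0 \le P(G,m) - P_{DP}(G,m) \le O(m^{n-g})$. So fix a full $m$-fold cover $\HH = (L,H,M)$ and name $L(u) = \{(u,j): j \in [m]\}$. Since $G$ is simple, for each edge $uv$ the set of edges of $H$ between $L(u)$ and $L(v)$ is a perfect matching, hence is recorded by a permutation $\sigma_{uv}$ of $[m]$. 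An $\HH$-coloring is then a map $c \colon V(G) \to [m]$ with $c(v) \ne \sigma_{uv}(c(u))$ for every edge $uv$, while a proper $m$-coloring is the special case in which every $\sigma_{uv}$ is the identity.

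First I would apply inclusion--exclusion over edge subsets. Writing $N_{\HH}(E')$ for the number of $c \colon V(G) \to [m]$ satisfying $c(v) = \sigma_{uv}(c(u))$ for every $uv \in E'$, and $M(E')$ for the analogous count with each constraint replaced by $c(u) = c(v)$, one obtains $P_{DP}(G,\HH) = \sum_{E' \subseteq E(G)} (-1)^{|E'|} N_{\HH}(E')$ and $P(G,m) = \sum_{E'} (-1)^{|E'|} M(E')$, where $M(E') = m^{k(E')}$ and $k(E')$ is the number of components of the spanning subgraph $(V(G),E')$. The key preliminary observation is that when $E'$ is a forest the constraints propagate uniquely and consistently from an arbitrarily colored root of each tree, so $N_{\HH}(E') = m^{k(E')} = M(E')$; hence forests contribute nothing to the difference, and
\[
P(G,m) - P_{DP}(G,\HH) = \sum_{E' \text{ contains a cycle}} (-1)^{|E'|}\bigl(M(E') - N_{\HH}(E')\bigr).
\]

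Next I would carry out a rank computation to control the exponents. If $E'$ contains a cycle then, since $G$ has girth at least $g$, that cycle has length at least $g$, so the rank $n - k(E')$ of $(V(G),E')$ is at least $g-1$; equality forces $E'$ to be a single (necessarily chordless) $g$-cycle, and every other cyclic $E'$ satisfies $k(E') \le n-g$. For those ``other'' sets the crude bound $0 \le N_{\HH}(E') \le M(E') = m^{k(E')} \le m^{n-g}$ shows each term is $O(m^{n-g})$, and there are at most $2^{|E(G)|}$ of them, a constant depending only on $G$; so their total contribution is $O(m^{n-g})$ uniformly in $\HH$. It remains to handle the $g$-cycles, and this is exactly where the \emph{oddness} of $g$ enters. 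For a $g$-cycle $Z$ one computes $M(Z) = m^{n-g+1}$ and $N_{\HH}(Z) = m^{n-g}\,\mathrm{fix}(\pi_Z)$, where $\pi_Z$ is the composition of the $g$ matching-permutations around $Z$ and $\mathrm{fix}(\pi_Z) \le m$; thus $M(Z) - N_{\HH}(Z) = m^{n-g}\bigl(m - \mathrm{fix}(\pi_Z)\bigr) \ge 0$. Since $g$ is odd, the sign $(-1)^{|Z|} = (-1)^g = -1$, so each $g$-cycle contributes a \emph{nonpositive} amount; dropping these terms only increases the sum, giving $P(G,m) - P_{DP}(G,\HH) \le O(m^{n-g})$.

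The main obstacle is the exponent bookkeeping in this middle step: a single $g$-cycle produces a term of order $m^{n-g+1}$, one power of $m$ above the target, so a term-by-term absolute bound cannot work and genuine sign information is needed. The crux is that among cyclic edge sets only the $g$-cycles reach exponent $n-g+1$, together with the fact that for odd $g$ these top-order terms all carry the favorable sign and so may be discarded when proving the upper bound (for even $g$ they would carry the opposite sign and the statement would fail at this order, which is consistent with the hypothesis that $g$ is odd). I would also take care that every estimate is uniform in the cover $\HH$, so that passing to the minimizing cover at the end is legitimate.
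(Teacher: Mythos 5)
Your argument is correct, but note that this paper does not actually prove Theorem~\ref{thm: general}: it is imported from~\cite{MT20} as a black box (it is the main external input to the proof of Theorem~\ref{thm: dual}), so there is no in-paper proof to compare against. Taken on its own, your proof is sound and self-contained. The Whitney-type rank expansion over edge subsets is set up correctly: for a full $m$-fold cover of a simple graph each $M(uv)$ is a perfect matching, hence a permutation $\sigma_{uv}$, inclusion--exclusion gives $P_{DP}(G,\HH)=\sum_{E'}(-1)^{|E'|}N_{\HH}(E')$, forests satisfy $N_{\HH}(E')=m^{k(E')}$ and drop out of the difference, and the girth hypothesis correctly forces $k(E')\leq n-g+1$ for cyclic $E'$ with equality exactly for chordless $g$-cycles (a chord is excluded by the girth bound, and any larger cyclic component or additional nontrivial component pushes $k(E')$ down to at most $n-g$). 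The decisive step --- that a $g$-cycle $Z$ contributes $(-1)^{g}\,m^{n-g}\bigl(m-\mathrm{fix}(\pi_Z)\bigr)\leq 0$ when $g$ is odd, so these top-order terms may be discarded for the upper bound while everything else is crudely $O(m^{n-g})$ uniformly in $\HH$ --- is exactly where the parity hypothesis must enter, and you use it correctly; combined with $P_{DP}(G,m)\leq P(G,m)$ this closes the argument. Whether this coincides with the route taken in~\cite{MT20} I leave aside, but your version has the virtue of making completely explicit which covers can inflate $P(G,m)-P_{DP}(G,\HH)$ to order $m^{n-g}$ (namely through the non-$g$-cycle cyclic edge sets) and why even girth would break the bound at order $m^{n-g+1}$, which is consistent with the $C_{2k}$ computations in Proposition~\ref{pro: cycle}.
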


Given a multigraph $G$, the deletion-contraction relation that we prove in this paper will naturally require us to study the full covers of $G$ for which there are as many colorings as possible.  Consequently, we define the \emph{dual DP color function} of $G$, $P^*_{DP}(G,m)$, as the maximum value of $P_{DP}(G, \mathcal{H})$ where the maximum is taken over all full $m$-fold covers $\mathcal{H}$ of $G$.  Clearly, $P_{DP}(G, m) \leq P_\ell(G,m) \leq P(G,m) \leq P^*_{DP}(G,m)$.  Since the dual DP color function is an upper bound on the chromatic polynomial it may be of independent interest. 

\subsection{Outline of Results and Open Questions}

We now present an outline of the paper while also mentioning two open questions.  In Section~\ref{basic} we present some basic results on the DP color function and dual DP color function that will be used later in the paper.  Many of the basic results presented are new since this paper is the first to extend the definition of the DP color function to multigraphs, and this paper introduces the notion of the dual DP color function.  Then in Section~\ref{main}, if $\HH = (L,H,M)$ is a full $m$-fold cover of $G$ and $e \in E(G)$, we begin by giving technical definitions of the covers $\HH - e$ and $\HH \cdot e$ which are covers of $G-e$ and $G \cdot e$ respectively that are obtained from $\HH$.  After introducing these definitions, we prove our deletion-contraction relation for the DP color function which we now state.

\begin{thm} \label{thm: deletioncontraction}
Suppose $G$ is a multigraph, $\HH = (L,H,M)$ is a full $m$-fold cover of $G$, and $e$ is an edge in $G$ with endpoints $u$ and $v$.  Then,  
$$P_{DP}(G, \mathcal{H}) \geq P_{DP}(G-e, \mathcal{H} - e) - P_{DP}(G \cdot e, \mathcal{H} \cdot e).$$
Consequently,
$$P_{DP}(G-e,m) - P^*_{DP}(G \cdot e, m ) \leq P_{DP}(G,m).$$
Moreover, if for each $f \in E_G(u,v) - \{e\}$, no edge in $M(f)$ has the same endpoints as any edge in $M(e)$, then
$$P_{DP}(G, \mathcal{H}) = P_{DP}(G-e, \mathcal{H} - e) - P_{DP}(G \cdot e, \mathcal{H} \cdot e).$$
Consequently, when $e_G(u,v) = 1$, 
$$ P^*_{DP}(G,m) \leq P^*_{DP}(G-e,m) - P_{DP}(G \cdot e, m ).$$
\end{thm}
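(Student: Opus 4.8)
The plan is to derive the final inequality directly from the equality in the ``moreover'' clause by applying it to an extremal cover for the dual DP color function. First I would fix $m \in \N$ and let $\HH = (L,H,M)$ be a full $m$-fold cover of $G$ attaining the maximum that defines $P^*_{DP}(G,m)$, so that $P_{DP}(G,\HH) = P^*_{DP}(G,m)$. Such a cover exists because, up to relabeling of vertex and matching names, there are only finitely many full $m$-fold covers of $G$, so the maximum is genuinely achieved.

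The key observation is that the hypothesis of the ``moreover'' clause is vacuously satisfied here. Since $e_G(u,v) = 1$, we have $E_G(u,v) = \{e\}$, whence $E_G(u,v) - \{e\} = \emptyset$. Thus there is no edge $f$ for which the matching condition on $M(f)$ could fail, and the equality
$$P_{DP}(G,\HH) = P_{DP}(G-e, \HH - e) - P_{DP}(G \cdot e, \HH \cdot e)$$
applies to our chosen $\HH$. Combining this with the choice of $\HH$ gives $P^*_{DP}(G,m) = P_{DP}(G-e, \HH - e) - P_{DP}(G \cdot e, \HH \cdot e)$.

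It then remains to bound the two terms on the right using the extremal characterizations of the color functions. By construction $\HH - e$ is a full $m$-fold cover of $G-e$ and $\HH \cdot e$ is a full $m$-fold cover of $G \cdot e$. Since $P^*_{DP}(G-e,m)$ is a maximum over all such covers, $P_{DP}(G-e, \HH - e) \leq P^*_{DP}(G-e,m)$; and since $P_{DP}(G \cdot e, m)$ is a minimum over all such covers, $P_{DP}(G \cdot e, \HH \cdot e) \geq P_{DP}(G \cdot e, m)$, so $-P_{DP}(G \cdot e, \HH \cdot e) \leq -P_{DP}(G \cdot e, m)$. Substituting both estimates into the displayed equality yields $P^*_{DP}(G,m) \leq P^*_{DP}(G-e,m) - P_{DP}(G \cdot e, m)$, as desired.

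The only real subtlety — the step I would check most carefully — is the assertion that $\HH - e$ and $\HH \cdot e$ are genuinely \emph{full} $m$-fold covers of $G-e$ and $G \cdot e$, and not merely covers, since otherwise the bounds against $P^*_{DP}(G-e,m)$ and $P_{DP}(G \cdot e, m)$ would not be available. This should be guaranteed by the technical definitions of $\HH - e$ and $\HH \cdot e$ developed earlier in the section; granting that those constructions preserve fullness and the $m$-fold property, the argument above is immediate, and the only care needed is the sign flip on the subtracted minimum term.
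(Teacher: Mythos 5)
There is a genuine gap: your proposal proves only the final displayed consequence of the theorem and takes the rest of the statement as given. You begin from ``the equality in the \emph{moreover} clause,'' but that equality --- along with the unconditional inequality $P_{DP}(G, \mathcal{H}) \geq P_{DP}(G-e, \mathcal{H} - e) - P_{DP}(G \cdot e, \mathcal{H} \cdot e)$ and its consequence $P_{DP}(G-e,m) - P^*_{DP}(G \cdot e, m ) \leq P_{DP}(G,m)$ --- is the actual content of the theorem, and none of it is established in your argument. The paper's proof works by partitioning the $(\mathcal{H}-e)$-colorings of $G-e$: after normalizing so that $M(e)$ matches $(u,j)$ with $(v,j)$, it lets $\mathcal{I}_j$ be the set of $(\mathcal{H}-e)$-colorings containing both $(u,j)$ and $(v,j)$ and $\mathcal{C}_j$ the set of $(\mathcal{H}\cdot e)$-colorings containing $(w,j)$, exhibits the map $I \mapsto (I - \{(u,j),(v,j)\}) \cup \{(w,j)\}$ as a bijection $\mathcal{I}_j \to \mathcal{C}_j$ whenever $\mathcal{I}_j \neq \emptyset$, and then computes $P_{DP}(G,\mathcal{H}) = |\mathcal{I}| - \sum_j |\mathcal{I}_j| \geq |\mathcal{I}| - \sum_j |\mathcal{C}_j|$. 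The \emph{moreover} hypothesis is used precisely to show that $\mathcal{I}_l = \emptyset$ forces $\mathcal{C}_l = \emptyset$ (otherwise a $(\mathcal{H}\cdot e)$-coloring through $(w,l)$ could be pulled back to one through $(u,l)$ and $(v,l)$, since those two cover vertices are nonadjacent in $\mathcal{H}-e$ under that hypothesis), turning the inequality into an equality. Without this construction your proof has no foundation.

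The part you do prove is correct and matches the paper's (implicit) derivation: when $e_G(u,v)=1$ the set $E_G(u,v) - \{e\}$ is empty, so the equality applies to every full $m$-fold cover, and evaluating it at a cover attaining $P^*_{DP}(G,m)$ and bounding the two right-hand terms by $P^*_{DP}(G-e,m)$ and $P_{DP}(G\cdot e,m)$ respectively gives the claim. Your concern about whether $\mathcal{H}-e$ and $\mathcal{H}\cdot e$ are full $m$-fold covers is resolved by the definitions preceding the theorem in Section 3, which construct them explicitly as such. But as written, your submission addresses only the ``Consequently'' statements and omits the two claims that require real work.
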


After we prove Theorem~\ref{thm: deletioncontraction}, we show how it can be used to help determine the DP color function and dual DP color function of $C_n$ for each $n \geq 3$ and simple unicyclic graphs (i.e., connected simple graphs containing exactly one cycle).  In fact, the lower bound on the DP color function and upper bound on the dual DP color function provided by Theorem~\ref{thm: deletioncontraction} are tight for simple unicyclic graphs.  We will also see that these bounds are tight for trees as well.  We end Section~\ref{main} by showing examples which demonstrate that the bounds in Theorem~\ref{thm: deletioncontraction} are tight for graphs that are neither trees nor simple unicyclic graphs.  This leads us to pose the following two open questions.

\begin{ques} \label{ques: lower}
For which multigraphs $G$ does there exist an $e \in E(G)$ such that $P_{DP}(G-e,m) - P^*_{DP}(G \cdot e, m ) = P_{DP}(G,m)$ for infinitely many $m \in \N$?
\end{ques}

\begin{ques} \label{ques: upper}
For which multigraphs $G$ does there exist an edge $e$ with endpoints $u$ and $v$ such that $e_G(u,v) = 1$ and $ P^*_{DP}(G,m) = P^*_{DP}(G-e,m) - P_{DP}(G \cdot e, m )$ for infinitely many $m \in \N$?
\end{ques}

Finally, in Section~\ref{dual} we present a nontrivial application of Theorem~\ref{thm: deletioncontraction}.  In particular, we show how Theorems~\ref{thm: general} and~\ref{thm: deletioncontraction} can be used to prove the following result on the asymptotics of the dual DP color function.

\begin{thm} \label{thm: dual}
Suppose $g$ is an odd integer with $g \geq 3$.  If $G$ is a simple graph on $n$ vertices with girth at least $g-1$, then $P^*_{DP}(G, m) - P(G,m) = O(m^{n-g+1})$ as $m \rightarrow \infty$. 
\end{thm}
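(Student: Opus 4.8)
The plan is to induct on the number of edges of $G$, holding the odd integer $g$ fixed, and to play the two deletion-contraction identities against each other. For a connected simple graph $G$ on $n$ vertices with girth at least $g-1$ and any edge $e$ with endpoints $u,v$ (so $e_G(u,v)=1$ automatically), the last consequence of Theorem~\ref{thm: deletioncontraction} gives $P^*_{DP}(G,m) \le P^*_{DP}(G-e,m) - P_{DP}(G\cdot e,m)$, while Theorem~\ref{thm: chromaticpoly} gives $P(G,m) = P(G-e,m) - P(G\cdot e,m)$. Writing $D(G) := P^*_{DP}(G,m) - P(G,m) \ge 0$ and subtracting the second relation from the first, I obtain the master inequality
\[
D(G) \;\le\; D(G-e) \;+\; \bigl(P(G\cdot e,m) - P_{DP}(G\cdot e,m)\bigr).
\]
The strategy is to bound the first summand by the inductive hypothesis and the second by Theorem~\ref{thm: general}.

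For the inductive step I assume $g \ge 5$ and take $e$ to lie on a shortest cycle of $G$; such an edge exists unless $G$ is a tree, which is the trivial base case of the induction since then $P^*_{DP}\equiv P$ and $D\equiv 0$. Deleting a cycle edge leaves $G-e$ connected and simple with girth still at least $g-1$ but with one fewer edge, so $D(G-e) = O(m^{n-g+1})$ by induction. To handle the second summand I apply Theorem~\ref{thm: general} to $G\cdot e$, and the essential structural checks are that $G\cdot e$ is simple and has girth at least $g-2$: because $g\ge 5$ forces the girth of $G$ to be at least $4$, the endpoints $u,v$ of $e$ have no common neighbor, so contraction produces no parallel edge, and every cycle through $e$ shortens by exactly one while all others are unchanged, leaving girth at least $g-2\ge 3$. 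Since $G\cdot e$ has $n-1$ vertices, Theorem~\ref{thm: general} with the odd parameter $g-2$ yields $P(G\cdot e,m) - P_{DP}(G\cdot e,m) = O(m^{(n-1)-(g-2)}) = O(m^{n-g+1})$, and adding this single $O(m^{n-g+1})$ term to $D(G-e)$ closes the induction for every $g\ge 5$.

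The base case $g = 3$, where the hypothesis ``girth at least $2$'' is vacuous, must be argued separately, both because the parameter in Theorem~\ref{thm: general} would drop to $1$ and because that theorem controls $P-P_{DP}$ rather than the dual gap $P^*_{DP}-P$. Here I would instead show directly that the top two coefficients of $P_{DP}(G,\mathcal H)$ match those of $P(G,m)$ \emph{uniformly} over all full $m$-fold covers $\mathcal H$. Inclusion-exclusion over the matching edges $\bigcup_{e}M(e)$ of $\mathcal H$ writes $P_{DP}(G,\mathcal H)$ as an alternating sum in which the empty set contributes $m^n$ and the single matching edges contribute $-|E(G)|\,m^{n-1}$. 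A consistent set $F$ of matching edges contains at most one edge per edge of $G$, hence projects to a subset $E'\subseteq E(G)$; the number of consistent $F$ over a fixed $E'$ is at most $m^{c}$, where $c$ is the number of components of $(V(E'),E')$, and each contributes $m^{\,n-|V(E')|}$, for a total at most $m^{\,n-\mathrm{rank}(E')}$ with $\mathrm{rank}(E')=|V(E')|-c$. Because $G$ is simple, $\mathrm{rank}(E')\ge 2$ whenever $|E'|\ge 2$, so every projection with at least two edges contributes $O(m^{n-2})$; as there are only $2^{|E(G)|}$ projections, the entire remainder is $O(m^{n-2})$ uniformly in $\mathcal H$. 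Consequently $P^*_{DP}(G,m) = m^n - |E(G)|\,m^{n-1} + O(m^{n-2}) = P(G,m) + O(m^{n-2})$, which is exactly the $g=3$ claim.

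I expect the main obstacle to be this uniformity over covers in the base case: obtaining the matching leading coefficients is routine, but bounding the full inclusion-exclusion remainder by a single $O(m^{n-2})$ valid for \emph{every} cover relies on the rank estimate above rather than on a cover-by-cover computation. A secondary point to dispatch cleanly is disconnection: deleting a shortest-cycle edge preserves connectivity and contraction preserves connectivity, so the induction never leaves the class of connected graphs; alternatively one invokes multiplicativity of $P$ and $P^*_{DP}$ over components, under which a product of factors each of the form $P(G_i,m)+O(m^{n_i-g+1})$ still differs from $\prod_i P(G_i,m)$ by $O(m^{n-g+1})$.
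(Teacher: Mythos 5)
Your induction is the paper's: for fixed odd $g$, induct on $|E(G)|$, pick an edge $e$ on a cycle, subtract the chromatic-polynomial deletion-contraction identity from the inequality $P^*_{DP}(G,m) \le P^*_{DP}(G-e,m) - P_{DP}(G\cdot e,m)$, bound the $G-e$ term by induction and the $P(G\cdot e,m)-P_{DP}(G\cdot e,m)$ term by Theorem~\ref{thm: general}; your structural checks on $G\cdot e$ (simplicity and girth at least $g-2$ when $g\ge 5$) are exactly the ones the paper makes. Where you diverge is the case $g=3$. The paper does \emph{not} abandon the induction there: it observes that $G\cdot e$ may be a multigraph, proves a multigraph substitute for Theorem~\ref{thm: general} (its Theorem~\ref{thm: easy}: $P(G,m)-P_{DP}(G,m)=O(m^{n-1})$ for any multigraph, obtained from the greedy ordering bound of Proposition~\ref{pro: order}), and plugs that into the same master inequality to get the needed $O(m^{(n-1)-1})=O(m^{n-2})$. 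You instead prove the $g=3$ statement directly, by an inclusion-exclusion over sets of matching edges of an arbitrary full $m$-fold cover, using the rank bound $m^{n-\mathrm{rank}(E')}$ with $\mathrm{rank}(E')\ge 2$ for $|E'|\ge 2$ to show $P_{DP}(G,\mathcal H)=m^n-|E(G)|m^{n-1}+O(m^{n-2})$ uniformly in $\mathcal H$, hence $P^*_{DP}(G,m)-P(G,m)=O(m^{n-2})$. Your argument is correct and self-contained, and it buys slightly more (it identifies the two leading coefficients of $P^*_{DP}$ for every simple graph without any induction); the paper's route is shorter because Theorem~\ref{thm: easy} is a two-line consequence of machinery already developed, and it keeps all values of $g$ inside one uniform inductive step. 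Either way the theorem is proved.
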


Theorem~\ref{thm: dual} is best possible since we will see below that for each $k \in \N$,  $P^*_{DP}(C_{2k+1}, m) - P(C_{2k+1},m) = \Theta(m)$ as $m \rightarrow \infty$. 

\section{Basic Results} \label{basic}

In this section we prove some basic results about the DP color function and dual DP color function that will be needed in the final two sections of the paper. Our first basic result has already appeared in the literature, and we will use it frequently.

\begin{pro} [\cite{KM19}] \label{pro: tree}
Suppose $T$ is a tree and $\mathcal{H} = (L, H, M)$ is a full $m$-fold cover of $T$.  Then, $\mathcal{H}$ has a canonical labeling.
\end{pro}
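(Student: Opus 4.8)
The plan is to exploit the acyclicity of $T$ to propagate a labeling outward from a root without ever encountering a conflicting constraint. First I would record what the hypotheses force on a single edge. If $uv \in E(T)$, then since $T$ is simple we have $E_T(u,v) = \{uv\}$, so by condition (4) the edges of $H$ between $L(u)$ and $L(v)$ are exactly those of $M(uv)$; and since $\HH$ is a full $m$-fold cover, $|E_H(L(u),L(v))| = e_T(u,v)m = m$ while $|L(u)| = |L(v)| = m$. Thus $M(uv)$ is a matching of size $m$ between two sets of size $m$, i.e.\ a perfect matching, and it therefore induces a bijection $\sigma_{uv} : L(u) \to L(v)$.

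Next I would build the labeling by rooting and propagating. Fix a root $r \in V(T)$ and choose any bijection to name $L(r) = \{(r,j) : j \in [m]\}$. Then I would process the remaining vertices in nondecreasing order of distance from $r$ (a breadth-first order). Each non-root vertex $v$ has a unique parent $u$ on the path to $r$, and by the time $v$ is processed $u$ has already been named $L(u) = \{(u,j) : j \in [m]\}$. I would then name the vertex $\sigma_{uv}((u,j)) \in L(v)$ by the label $(v,j)$ for each $j \in [m]$; since $\sigma_{uv}$ is a bijection, this assigns to each element of $L(v)$ exactly one label of the form $(v,j)$.

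Finally I would verify the two defining properties of a canonical labeling. Under the rooting, every edge of $T$ is the edge joining some vertex to its unique parent, so it suffices to check a parent-child edge $uv$ (with $u$ the parent). By construction $(v,j) = \sigma_{uv}((u,j))$, which means $(u,j)(v,j)$ is one of the $m$ matching edges of $M(uv)$ and hence lies in $E(H')$; and $|E_{H'}(L(u),L(v))| = m$ follows from $|E_H(L(u),L(v))| = m$ together with the fact that, $M(uv)$ being a matching, no two of its edges share an endpoint and so no two are identified when passing to the underlying graph $H'$. This establishes the claim.

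The step I expect to require the most care is not any single computation but the observation that acyclicity is exactly what makes the greedy propagation consistent: because $T$ is a tree, each non-root vertex is reached through a single parent edge and is labeled only once, so no vertex is ever subjected to two competing constraints. (In the presence of a cycle the bijections $\sigma$ accumulated around the cycle could compose to a nonidentity permutation, obstructing any canonical labeling.) An equivalent and perhaps cleaner way to organize the argument would be an explicit induction on $|V(T)|$: remove a leaf $w$, canonically label the smaller tree $T - w$ via the inductive hypothesis applied to the restricted cover, and then extend the labeling across the unique pendant edge at $w$ using its perfect matching.
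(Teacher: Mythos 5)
Your argument is correct and complete: the fullness condition forces each $M(uv)$ to be a perfect matching, and rooting the tree and propagating these bijections outward (or, equivalently, your leaf-removal induction) yields a canonical labeling with no conflicts precisely because $T$ is acyclic. The paper itself gives no proof of this proposition --- it is quoted from \cite{KM19} --- and your argument is essentially the one given there, so there is nothing to compare beyond noting the match.
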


As was mentioned in Section~\ref{intro}, if $\HH$ is a full $m$-fold cover of $G$ with a canonical labeling, then finding an $\HH$-coloring of $G$ is equivalent to finding a proper $m$-coloring of $G$.  So, our next result follows immediately from Proposition~\ref{pro: tree}.

\begin{cor} \label{cor: tree}
If $T$ is a tree on $n$ vertices and $m \in \N$, then $P_{DP}(T,m) = P_{DP}^*(T,m) = P(T,m) = m(m-1)^{n-1}$.
\end{cor}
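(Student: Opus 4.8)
The plan is to observe that the corollary is essentially a repackaging of Proposition~\ref{pro: tree} together with the bijection established in Section~\ref{intro}. The crux is to show that $P_{DP}(T,\mathcal{H})$ takes the \emph{same} value on \emph{every} full $m$-fold cover $\mathcal{H}$ of $T$; once that is in hand, the minimum and the maximum of $P_{DP}(T,\mathcal{H})$ over all such covers are forced to coincide, and both equal that common value.

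First I would fix an arbitrary full $m$-fold cover $\mathcal{H} = (L,H,M)$ of $T$. By Proposition~\ref{pro: tree}, $\mathcal{H}$ has a canonical labeling, so (writing $H'$ for the underlying graph of $H$) we may name $L(u) = \{(u,j) : j \in [m]\}$ for each $u \in V(T)$ with the property that $(u,j)(v,j) \in E(H')$ whenever $uv \in E(T)$. As recalled in Section~\ref{intro}, for a cover with a canonical labeling the map $f(c) = \{(v,c(v)) : v \in V(T)\}$ is a bijection between the set $\mathcal{C}$ of proper $m$-colorings of $T$ and the set $\mathcal{I}$ of $\mathcal{H}$-colorings of $T$. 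Hence $P_{DP}(T,\mathcal{H}) = |\mathcal{I}| = |\mathcal{C}| = P(T,m)$, and crucially this value is independent of the choice of $\mathcal{H}$.

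Since $P_{DP}(T,m)$ and $P^*_{DP}(T,m)$ are respectively the minimum and the maximum of $P_{DP}(T,\mathcal{H})$ taken over all full $m$-fold covers $\mathcal{H}$ of $T$, and since this quantity is constantly equal to $P(T,m)$, I conclude
$$P_{DP}(T,m) = P^*_{DP}(T,m) = P(T,m).$$
Finally, invoking the standard formula $P(T,m) = m(m-1)^{n-1}$ for a tree on $n$ vertices recalled in Section~\ref{intro} closes the chain of equalities.

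There is no genuine obstacle here; the only point requiring a moment of care is the degenerate case in which $T$ admits no proper $m$-coloring (for instance $m = 1$ with $n \geq 2$), where the bijection statement of Section~\ref{intro} is phrased under the hypothesis that a proper $m$-coloring exists. In that case $\mathcal{C} = \emptyset$ forces $P(T,m) = 0$, and one checks directly that $\mathcal{I} = \emptyset$ as well: the canonical labeling allows any $\mathcal{H}$-coloring to be read off as a proper $m$-coloring, so a nonempty $\mathcal{I}$ would contradict $\mathcal{C} = \emptyset$. Thus the identity $P_{DP}(T,\mathcal{H}) = P(T,m)$ persists for every $m \in \N$, and the argument goes through uniformly.
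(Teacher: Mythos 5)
Your proposal is correct and follows exactly the paper's route: the paper likewise deduces the corollary immediately from Proposition~\ref{pro: tree} together with the canonical-labeling bijection recalled in Section~\ref{intro}, which forces $P_{DP}(T,\mathcal{H}) = P(T,m)$ for every full $m$-fold cover $\mathcal{H}$ and hence collapses the minimum and maximum. Your extra remark on the degenerate case $\mathcal{C} = \emptyset$ is a careful touch the paper leaves implicit, but it does not change the argument.
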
 

Next, we prove a basic fact on the relationship between the DP color function (resp. dual DP color function) of a multigraph $G$ and the DP color function (resp. dual DP color function) of its underlying graph.  Importantly, this fact will allow us to restrict our attention to simple graphs when working with the dual DP color function.

\begin{pro} \label{pro: underlying}
Suppose that $G$ is a multigraph and $U$ is the underlying graph of $G$.  Then, $P_{DP}(G,m) \leq P_{DP}(U,m)$ and $P^*_{DP}(G,m) = P^*_{DP}(U,m)$ for each $m \in \N$.  
\end{pro}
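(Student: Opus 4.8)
The plan is to pass back and forth between full $m$-fold covers of $G$ and of $U$ via two constructions, exploiting the facts that $V(G)=V(U)$ and that $u,v$ are adjacent in $U$ precisely when $e_G(u,v)\geq 1$. The first thing I would record is that in any full $m$-fold cover $\mathcal{H}=(L,H,M)$ of $G$, each matching $M(e)$ with $e\in E_G(u,v)$ is a \emph{perfect} matching between $L(u)$ and $L(v)$: there are $e_G(u,v)$ such matchings, their edge sets are pairwise disjoint by condition (3), and by condition (4) they together account for the $e_G(u,v)m$ edges between $L(u)$ and $L(v)$, forcing each to have size $m$. I would also note that whether a choice function $c$ with $c(w)\in L(w)$ for all $w$ is an $\mathcal{H}$-coloring depends only on which unordered pairs $\{c(u),c(v)\}$ are matched by some $M(e)$, and not on edge multiplicities in $H$.

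\emph{Construction A (from $U$ to $G$).} Given a full $m$-fold cover $\mathcal{H}'=(L',H',M')$ of $U$, I would build a full $m$-fold cover $\mathcal{H}$ of $G$ by taking $L=L'$ and, for each $u,v$ with $e_G(u,v)\geq 1$ and each $e\in E_G(u,v)$, letting $M(e)$ be a parallel copy of the single perfect matching $M'(uv)$ (parallel edges in $H$ are permitted since $G$ is a multigraph). This satisfies conditions (1)--(4) and is full because $|E_H(L(u),L(v))| = e_G(u,v)m$. Since the union $\bigcup_{e\in E_G(u,v)} M(e)$ matches exactly the same unordered pairs as $M'(uv)$, the $\mathcal{H}$-colorings coincide with the $\mathcal{H}'$-colorings, so $P_{DP}(G,\mathcal{H}) = P_{DP}(U,\mathcal{H}')$.

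\emph{Construction B (from $G$ to $U$).} Given a full $m$-fold cover $\mathcal{H}=(L,H,M)$ of $G$, I would build a full $m$-fold cover $\mathcal{H}'$ of $U$ by taking $L'=L$ and, for each adjacent pair $u,v$, fixing one edge $e_0\in E_G(u,v)$ and setting $M'(uv)=M(e_0)$. As $M(e_0)$ is a perfect matching, $\mathcal{H}'$ is a valid full $m$-fold cover of $U$. Because the pairs forbidden by $M'(uv)$ form a subset of those forbidden by $\bigcup_{e\in E_G(u,v)} M(e)$, every $\mathcal{H}$-coloring is also an $\mathcal{H}'$-coloring, giving $P_{DP}(G,\mathcal{H}) \leq P_{DP}(U,\mathcal{H}')$.

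Assembling the three inequalities finishes the proof. Applying Construction A to a full cover of $U$ attaining $P_{DP}(U,m)$ gives $P_{DP}(G,m) \leq P_{DP}(G,\mathcal{H}) = P_{DP}(U,m)$; applying it to a cover attaining $P^*_{DP}(U,m)$ gives $P^*_{DP}(G,m) \geq P^*_{DP}(U,m)$; and applying Construction B to an arbitrary full cover of $G$ and taking the maximum gives $P^*_{DP}(G,m) \leq P^*_{DP}(U,m)$, which together with the previous line yields $P^*_{DP}(G,m) = P^*_{DP}(U,m)$. I expect the main obstacle to be the bookkeeping needed to check that Constructions A and B really do satisfy all four cover axioms — especially condition (3), which holds because the parallel copies are distinct edges of the multigraph $H$, and the fullness count — together with the clean justification that the collection of $\mathcal{H}$-colorings is determined solely by the matched pairs, independent of edge multiplicities.
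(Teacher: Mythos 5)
Your proposal is correct and follows essentially the same route as the paper: pass from covers of $U$ to covers of $G$ by duplicating each perfect matching across the parallel edges (the paper's explicit construction for $P^*_{DP}(G,m)\geq P^*_{DP}(U,m)$, which you also reuse to get $P_{DP}(G,m)\leq P_{DP}(U,m)$), and pass from covers of $G$ to covers of $U$ by discarding matchings so that the resulting cover graph is a subgraph, which only enlarges the set of colorings. The only cosmetic difference is that you obtain equality $P_{DP}(G,\mathcal{H})=P_{DP}(U,\mathcal{H}')$ in Construction A where the paper is content with an inequality for the first claim; both suffice.
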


\begin{proof}
Since $U$ is a spanning subgraph of $G$ it is immediately clear that if $\HH = (L,H,M)$ is a full $m$-fold cover of $G$, then there is a full $m$-fold cover $\HH' = (L,H',M')$ of $U$ such that $H'$ is a subgraph of $H$.  Consequently, $P^*_{DP}(G,m) \leq P^*_{DP}(U,m)$.  Conversely, if $\HH' = (L,H',M')$ is a full $m$-fold cover of $U$, then there is a full $m$-fold cover $\HH = (L,H,M)$ of $G$ such that $H'$ is a subgraph of $H$.  Consequently, $P_{DP}(G,m) \leq P_{DP}(U,m)$. 

Now, suppose $\HH= (L,H,M)$ is a full $m$-fold cover of $U$ such that $P_{DP}(U,\HH) = P^*_{DP}(U,m)$.  Suppose that $L(x) = \{(x,j): j \in [m]\}$ for each $x \in V(U)$.  We will now construct a full $m$-fold cover $\HH' = (L,H',M')$ of $G$.  Consider each $uv \in E(U)$.  Suppose $M(uv) = \{e_1, \ldots, e_m \}$. For each $e \in E_G(u,v)$ let $M'(e) = \{d_{e,1}, \ldots, d_{e,m} \}$ so that $d_{e,i}$ has the same endpoints as $e_i$ for each $i \in [m]$ and $M'(e) \cap M'(f) = \emptyset$ whenever $e$ and $f$ are distinct elements of $E_G(u,v)$.  Let $H'$ be the graph with vertex set $\bigcup_{x \in V(G)} L(x)$, and edges drawn so that $H'[L(x)]=K_{m}$ for each $x \in V(G)$ and $\bigcup_{e \in E(G)} M'(e) \subseteq E(H')$.  Notice $H$ is the underlying graph of $H'$.  Since $H$ is the underlying graph of $H'$,
$$P^*_{DP}(G,m) \geq P_{DP}(G,\HH') = P_{DP}(U,\HH) = P^*_{DP}(U,m).$$
Thus, $P^*_{DP}(G,m) = P^*_{DP}(U,m)$.   
\end{proof}

We now give an extension of Proposition~20 in~\cite{KM19} to DP color functions of multigraphs.  This result will be particularly useful in Section~\ref{dual} when we use our deletion-contraction relation to study the asymptotics of the dual DP color function.

\begin{pro} \label{pro: order}
Suppose $G$ is a multigraph with at least one edge and $v_1, \ldots, v_n$ is an ordering of the elements of $V(G)$ such that there are precisely $d_i$ edges with $v_i$ as an endpoint and some vertex preceding $v_i$ in the ordering as the other endpoint for each $i \in [n]$. If $D = \max_{i \in [n]} d_i $, then
$$P_{DP}(G,m) \geq \prod_{i=1}^n (m-d_i)$$
whenever $m \geq D$.
\end{pro}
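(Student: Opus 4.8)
The plan is to fix an arbitrary full $m$-fold cover $\HH = (L,H,M)$ of $G$ and show directly that $P_{DP}(G,\HH) \geq \prod_{i=1}^n (m-d_i)$; since $P_{DP}(G,m)$ is by definition the minimum of $P_{DP}(G,\HH)$ over all full $m$-fold covers, the claimed bound on $P_{DP}(G,m)$ follows at once. I would count the $\HH$-colorings greedily along the given ordering $v_1, \ldots, v_n$, building up partial colorings one vertex at a time.

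Recall that an $\HH$-coloring is exactly a choice of one vertex $c_i \in L(v_i)$ for each $i$ such that no two chosen vertices are adjacent in $H$. Call a tuple $(c_1, \ldots, c_i)$ with $c_k \in L(v_k)$ a \emph{valid partial coloring} if no two of its entries are adjacent in $H$. The heart of the argument is the following extension estimate: given any valid partial coloring $(c_1,\ldots,c_{i-1})$, the number of $c_i \in L(v_i)$ that extend it to a valid partial coloring is at least $m - d_i$. To see this, observe that a candidate $c_i$ is ruled out only if it is adjacent in $H$ to some already-chosen $c_j$ with $j < i$; by condition (4) in the definition of a cover, such an adjacency arises from an edge in $M(e)$ for some $e \in E_G(v_j, v_i)$. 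Because $M(e)$ is a matching, the fixed vertex $c_j$ is the endpoint of at most one edge of $M(e)$, so each edge $e$ joining $v_i$ to a preceding vertex forbids at most one vertex of $L(v_i)$. There are exactly $d_i$ such edges (counted with multiplicity in the multigraph), so at most $d_i$ vertices of $L(v_i)$ are forbidden, leaving at least $m - d_i$ admissible choices.

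With the extension estimate in hand, I would finish by induction on $i$. Let $N_i$ denote the number of valid partial colorings of $v_1, \ldots, v_i$. Since $v_1$ has no preceding vertex we have $d_1 = 0$ and $N_1 = m$. For the inductive step, each of the $N_{i-1}$ valid partial colorings of $v_1, \ldots, v_{i-1}$ admits at least $m - d_i$ valid extensions, so $N_i \geq (m-d_i) N_{i-1}$; the hypothesis $m \geq D \geq d_i$ guarantees $m - d_i \geq 0$, so multiplying these nonnegative factors is legitimate and yields $N_i \geq \prod_{k=1}^i (m-d_k)$. Taking $i = n$ gives $P_{DP}(G,\HH) = N_n \geq \prod_{k=1}^n (m-d_k)$, and minimizing over all full $m$-fold covers $\HH$ completes the proof.

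I expect the main point — more a subtlety than a genuine obstacle — to be the multigraph bookkeeping in the extension estimate: one must count the edges from $v_i$ to earlier vertices with multiplicity and invoke the matching property of each $M(e)$ separately, since a single pair $(v_j, v_i)$ may contribute several parallel edges and hence several (but still individually at most one) forbidden vertices. The requirement $m \geq D$ plays the essential role of keeping every factor $m - d_i$ nonnegative, which is exactly what makes the greedy count a valid lower bound.
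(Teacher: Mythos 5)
Your proposal is correct and follows essentially the same route as the paper: both arguments build an $\mathcal{H}$-coloring greedily along the given vertex ordering and use the matching property of each $M(e)$ to show that at most $d_i$ vertices of $L(v_i)$ are forbidden at step $i$, giving at least $m-d_i$ choices. The only cosmetic difference is that the paper disposes of the case $m=D$ separately (the product is then $0$), whereas you fold it into the induction by observing that all factors remain nonnegative; both handlings are fine.
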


\begin{proof}
First, note that the inequality clearly holds when $m=D$ since $0 \leq P_{DP}(G,D)$.  Suppose that $\mathcal{H}=(L,H,M)$ is an arbitrary full $m$-fold cover of $G$ and $m > D$.  Consider constructing an $\HH$-coloring, $I$, of $G$ via the following inductive procedure.  First, arbitrarily select a vertex $a_1 \in L(v_1)$ and place it in $I$.  Then, for each $i \in \{2, \ldots, n\}$ select a vertex $a_i \in L(v_i)-N_H(\{a_1, \ldots, a_{i-1}\})$ and place it in $I$ (we will justify why this is possible below).  After all $n$ steps of the procedure $I$ is a clearly an $\HH$-coloring of $G$.  

Notice there are clearly $m$ ways to complete the first step of the procedure.  Now, consider the $i^{th}$ step of the procedure where $i \in \{2, \ldots, n\}$.  For each $j \in [i-1]$ we know that in $H$ there are at most $e_G(v_j,v_i)$ neighbors of $a_j$ in $L(v_i)$.  Consequently, 
$$|L(v_i)-N_H(\{a_1, \ldots, a_{i-1}\})| \geq m - \sum_{j=1}^{i-1} e_G(v_j,v_i) = m-d_i.$$ 
So, there are at least $(m - d_i)$ possible choices for $a_i$.  Consequently $\prod_{i=1}^n (m-d_i) \leq P_{DP}(G, \HH)$ which immediately implies $\prod_{i=1}^n (m-d_i) \leq P_{DP}(G, m)$.  
\end{proof} 

Proposition~\ref{pro: order} allows us to give a formula for the DP color function of any graph with the property that its underlying graph is a tree.  Notice that Corollary~\ref{cor: tree} and Proposition~\ref{pro: underlying} already tell us the formula for the dual DP color function of such graphs.

\begin{pro} \label{pro: fattree}
Suppose $G$ is a graph on at least two vertices with the property that its underlying graph is a tree. Suppose $v_1, \ldots, v_n$ is an ordering of the elements of $V(G)$ so that each vertex in the ordering has at most one neighbor preceding it in the ordering.  For each $j \in [n]$ let $d_j$ be the number of edges in $E(G)$ that have $v_j$ as an endpoint and some vertex preceding $v_j$ in the ordering as the other endpoint.  Let $D = \max_{j \in [n]} d_j$.  Then $P_{DP}(G,m) = \prod_{i=1}^n (m-d_i)$ for each $m \geq D$.
\end{pro}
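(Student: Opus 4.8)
The plan is to establish the formula by proving the two inequalities separately, using the two tools that have just been developed. For the lower bound $P_{DP}(G,m) \geq \prod_{i=1}^n (m-d_i)$, I would invoke Proposition~\ref{pro: order} directly: the ordering $v_1, \ldots, v_n$ given in the hypothesis, together with its associated quantities $d_j$ and $D = \max_j d_j$, matches exactly the setup of that proposition, so the lower bound holds for every $m \geq D$ with no additional work.

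The real content is the matching upper bound $P_{DP}(G,m) \leq \prod_{i=1}^n (m-d_i)$, and for this I would exhibit a specific full $m$-fold cover $\HH$ of $G$ with $P_{DP}(G,\HH) = \prod_{i=1}^n (m-d_i)$. The natural candidate is a ``canonical'' cover built along the tree structure of the underlying graph $U$. Since $U$ is a tree, the ordering $v_1, \ldots, v_n$ lets each $v_i$ (for $i \geq 2$) have exactly one neighbor, say $v_{p(i)}$, preceding it in $U$. The idea is to use the parallel edges of $G$ to ``block off'' colors: for each edge $f \in E_G(v_{p(i)}, v_i)$ I would define the matching $M(f)$ so that the collection of matchings on the $e_G(v_{p(i)}, v_i) = d_i$ parallel edges between $v_{p(i)}$ and $v_i$ forbids, for each fixed color at $v_{p(i)}$, exactly $d_i$ distinct colors at $v_i$ — and do so consistently so that counting the independent sets of size $n$ in $H$ gives precisely the product $\prod_i (m-d_i)$.

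I would then count the $\HH$-colorings by the same inductive/greedy procedure as in the proof of Proposition~\ref{pro: order}: choosing $a_1 \in L(v_1)$ in $m$ ways, and at step $i$ choosing $a_i \in L(v_i)$ avoiding the neighbors of the already-chosen vertices. Because $U$ is a tree, the only previously-placed vertex adjacent to $v_i$ in $U$ is $a_{p(i)}$, so the number of forbidden colors at $v_i$ is governed solely by the $d_i$ parallel edges in $E_G(v_{p(i)}, v_i)$; if the cover is constructed so these $d_i$ matchings forbid $d_i$ \emph{distinct} colors of $L(v_i)$ for each choice of $a_{p(i)}$, then there are exactly $m - d_i$ admissible choices at every step and every partial selection extends. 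This yields $P_{DP}(G,\HH) = \prod_{i=1}^n (m-d_i)$ exactly, giving the upper bound. Combining the two inequalities finishes the proof.

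The main obstacle is the upper-bound construction: I must verify that the matchings $M(f)$ on the parallel edges can be chosen so that (a) they satisfy the cover axioms (1)--(4), in particular that the $d_i$ matchings between $L(v_{p(i)})$ and $L(v_i)$ are pairwise disjoint per axiom~(3), and (b) for \emph{every} vertex $a_{p(i)} \in L(v_{p(i)})$ the set of its $H$-neighbors in $L(v_i)$ has size exactly $d_i$, i.e.\ no two of the $d_i$ parallel matchings identify the same pair, so the forbidden colors never coincide. Requiring $m \geq D \geq d_i$ guarantees there is room to arrange $d_i$ distinct forbidden colors at each vertex. Once this construction is shown to be a legitimate full $m$-fold cover realizing the count, the greedy argument is routine and the equality follows.
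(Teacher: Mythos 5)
Your proposal is correct and follows essentially the same route as the paper: the lower bound via Proposition~\ref{pro: order}, and the upper bound via an explicit cover in which the $d_i$ parallel matchings between $L(v_{p(i)})$ and $L(v_i)$ forbid $d_i$ distinct colors (the paper realizes this with cyclic shifts $(v_q,j)\mapsto(v_r,((j+i)\bmod m)+1)$), counted by the same greedy procedure. The only detail you defer --- exhibiting the disjoint shifted matchings and checking the degenerate case $m=D$, where both sides are $0$ --- is routine and handled exactly this way in the paper.
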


\begin{proof}
Proposition~\ref{pro: order} implies that $P_{DP}(G,m) \geq \prod_{i=1}^n (m-d_i)$ whenever $m \geq D$. Suppose that $m \geq D$.  To complete the proof, we must construct an $m$-fold cover, $\HH = (L,H,M)$, of $G$ such that $P_{DP}(G,\HH) = \prod_{i=1}^n (m-d_i)$. Begin by letting $L(v_i) = \{(v_i,j): j \in [m] \}$ for each $i \in [n]$.  Whenever $v_q$ and $v_r$ are adjacent in $G$ and $E_G(v_q,v_r) = \{e_1, \ldots, e_k \}$, for each $i \in [k]$, let $M(e_i) = \{d_{i,1}, \ldots, d_{i,m} \}$ where $d_{i,j}$ is an edge with endpoints $(v_q,j)$ and $(v_r, ((j+i) \text{ mod } m) + 1)$ for each $j \in [m]$.  Let $H$ be the graph with vertex set $\bigcup_{v \in V(G)} L(v)$ and edges constructed as follows.  Construct edges so that $H[L(v_i)]$ is a complete graph for each $i \in [n]$, and $\bigcup_{e \in E(G)} M(e) \subseteq E(H)$.  

In the case $m = D$ it is clear that $P_{DP}(G, \HH)=0$ since for vertices $u,v \in V(G)$ with $e_G(u,v)=D$, each vertex in $L(u)$ is adjacent in $H$ to each vertex $L(v)$ by construction.  So, $H$ can't contain an independent set of size $|V(G)|$.

Now, suppose $m > D$ and consider constructing an $\HH$-coloring of $G$ by following the $n$-step inductive process described in the proof of Proposition~\ref{pro: order}.  Clearly, each $\HH$-coloring of $G$ can be constructed in exactly one way via this procedure.  Notice that by construction, in the $i^{th}$ step of the procedure, $|L(v_i)-N_{H}(\{a_1, \ldots, a_{i-1}\})| = m - d_i$.  The reason for this is clear when $d_i = 0$, and when $d_i > 0$ this follows from the fact that $v_i$ is adjacent in $G$ to exactly one vertex $v_l$ with $l < i$.  So, of all the vertices in the set $\{a_1, \ldots, a_{i-1}\}$ the vertices in $L(v_i)$ can only be adjacent in $H$ to $a_l$, and by construction $a_l$ is adjacent in $H$ to exactly $d_i$ vertices in $L(v_i)$.  This means that there are exactly $m - d_i$ ways to complete the $i^{th}$ step of the inductive procedure.  It immediately follows that $P_{DP}(G,\HH) = \prod_{i=1}^n (m-d_i)$.    
\end{proof}

Finally, we show that adding a pendant edge to a graph has a predictable effect on the DP color function and dual DP color function of the graph.  

\begin{pro} \label{pro: pendant}
Suppose $G$ is a multigraph and $v$ is a pendant vertex in $G$ of degree $k$ with $k \in \N$.  Suppose $G' = G-v$.  Then, for each $m \geq k$, $P_{DP}(G,m) = (m-k)P_{DP}(G',m)$ and $P^*_{DP}(G,m) = (m-1)P^*_{DP}(G',m)$
\end{pro}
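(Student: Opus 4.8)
The plan is to prove each equality by establishing matching lower and upper bounds, using in each case one direction that starts from an \emph{arbitrary} full cover of $G$ and restricts it to $G'$, and one direction that \emph{builds} an extremal cover of $G$ by extending an extremal cover of $G'$. Write $u$ for the unique neighbor of $v$ in $G$, and let $e_1, \ldots, e_k$ be the $k$ parallel edges joining $u$ and $v$ (so $e_G(u,v) = k$). The central observation is the following: if $\HH = (L,H,M)$ is any full $m$-fold cover of $G$, then by condition~(4) the edges of $H$ between $L(u)$ and $L(v)$ are exactly $\bigcup_{i=1}^k M(e_i)$, a disjoint (by condition~(3)) union of $k$ matchings whose total size must be $km$; since a matching between two $m$-element sets has at most $m$ edges, each $M(e_i)$ is forced to be a perfect matching. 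Hence every $a \in L(u)$ has exactly $k$ edges (with multiplicity) to $L(v)$, so its number of \emph{distinct} neighbors in $L(v)$ lies between $1$ and $k$.

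The second ingredient is a counting principle. Given $\HH$, let $\HH'$ be its restriction to $G' = G - v$, which is a full $m$-fold cover of $G'$. Every $\HH$-coloring of $G$ restricts to an $\HH'$-coloring of $G'$, and conversely an $\HH'$-coloring $I$ of $G'$ (which selects some $a \in L(u)$) extends to an $\HH$-coloring of $G$ precisely by adding a vertex of $L(v)$ nonadjacent to $a$ in $H$; the number of such choices is $m$ minus the number of distinct neighbors of $a$ in $L(v)$, which by the observation above lies between $m-k$ and $m-1$. Thus each $\HH'$-coloring has between $m-k$ and $m-1$ extensions, giving
$$ (m-k)\, P_{DP}(G', \HH') \;\le\; P_{DP}(G, \HH) \;\le\; (m-1)\, P_{DP}(G', \HH'). $$
Minimizing the left inequality over all $\HH$ (and using $P_{DP}(G',\HH') \ge P_{DP}(G',m)$) yields $P_{DP}(G,m) \ge (m-k)P_{DP}(G',m)$, while maximizing the right inequality over all $\HH$ (and using $P_{DP}(G',\HH') \le P^*_{DP}(G',m)$) yields $P^*_{DP}(G,m) \le (m-1)P^*_{DP}(G',m)$.

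It remains to establish the reverse bounds by constructing covers that realize the extremes $m-k$ and $m-1$. Starting from a full $m$-fold cover $\HH'$ of $G'$ achieving $P_{DP}(G',\HH') = P_{DP}(G',m)$ (respectively $= P^*_{DP}(G',m)$), I would name $L(u) = \{(u,j) : j \in [m]\}$, adjoin $L(v) = \{(v,j) : j \in [m]\}$, and attach the $k$ matchings. For the DP color function I would force every $a \in L(u)$ to have $k$ distinct neighbors in $L(v)$ by choosing the $k$ matchings to be distinct cyclic shifts (possible since $m \ge k$), giving exactly $m-k$ extensions per coloring and hence $P_{DP}(G,m) \le (m-k)P_{DP}(G',m)$. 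For the dual function I would instead force every $a$ to have exactly one distinct neighbor, by letting each of the $k$ matchings join $(u,j)$ to $(v,j)$ for all $j$; here the edge-disjointness demanded by condition~(3) holds because $H$ is permitted to carry parallel edges, so these are $k$ distinct parallel edges on the same pair, giving exactly $m-1$ extensions and hence $P^*_{DP}(G,m) \ge (m-1)P^*_{DP}(G',m)$. Combining the two directions in each case proves both equalities.

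The step I expect to require the most care is the construction for the dual color function: one must exploit that the cover graph $H$ may contain parallel edges in order to ``stack'' all $k$ matchings onto the pairs $(u,j),(v,j)$ while still satisfying the disjointness of condition~(3), so that each vertex of $L(u)$ retains only a single distinct neighbor in $L(v)$ and the maximal $m-1$ extensions are attained. Verifying that this stacked family genuinely constitutes a full $m$-fold cover of the multigraph $G$ itself, and not merely of its underlying graph, is the crux, and it is exactly where the extension of the DP color function to multigraphs does its work.
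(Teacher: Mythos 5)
Your proposal is correct and follows essentially the same route as the paper: restrict an arbitrary full cover to $G'=G-v$, count extensions of each $\HH'$-coloring (between $m-k$ and $m-1$ since each $M(e_i)$ is forced to be a perfect matching), and then realize the two extremes with explicit constructions, using distinct cyclic shifts for the $P_{DP}$ upper bound and stacked parallel matchings for the $P^*_{DP}$ lower bound. The only difference is that the paper leaves the dual-case construction as ``a similar construction,'' whereas you spell it out; your stacking argument is valid because, as the paper notes after its cover definition, conditions (3) and (4) permit $H$ to carry parallel edges.
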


\begin{proof}
 Suppose $u$ is the only neighbor of $v$ in $G$ and $\HH = (L,H,M)$ is an arbitrary full $m$-fold cover of $G$.  Let $L'$ be the restriction of $L$ to $V(G')$, $H' = H - L(v)$, and $M'$ be the restriction of $M$ to $E(G')$.  Then, $\HH' = (L',H',M')$ is a full $m$-fold cover of $G'$.  Notice that an $\HH$-coloring of $G$ can be constructed via the following two step procedure.  First, find an $\HH'$-coloring of $G'$ called $I'$.  Second, choose an element (if such an element exists) in $L(v)$ that is not adjacent in $H$ to the vertex in $I' \cap L'(u)$, and place it in $I'$.  It is clear that if both steps of this procedure can be completed, the result is an $\HH$-coloring of $G$.  Furthermore, the first step can be completed in at least $P_{DP}(G',m)$ ways and at most $P^*_{DP}(G',m)$ ways, and the second step can be completed in at least $(m-k)$ ways and at most $(m-1)$ ways (regardless of how the first step is completed).  So, $(m-k)P_{DP}(G',m) \leq P_{DP}(G, \HH) \leq (m-1)P^*_{DP}(G',m)$ which implies $(m-k)P_{DP}(G',m) \leq P_{DP}(G,m) \leq P_{DP}^*(G,m) \leq (m-1)P^*_{DP}(G',m)$.

Now, suppose that $\HH_1 = (L_1, H_1, M_1)$ is a full $m$-fold cover of $G'$ such that $P_{DP}(G', \HH_1) = P_{DP}(G',m)$.  Suppose that $L_1(x) = \{(x,j) : j \in [m] \}$ for each $x \in V(G')$.  Suppose that $L_2$ is the function on $V(G)$ that agrees with $L_1$ on $V(G')$ and maps $v$ to $\{(v,j) : j \in [m] \}$.  Let $M_2$ be the function on $E(G)$ that agrees with $M_1$ on $E(G')$, and deals with the elements in $E_G(u,v)$ in the following manner.  Suppose $E_G(u,v) = \{e_1, \ldots, e_k \}$; for each $i \in [k]$, let $M_2(e_i) = \{d_{i,1}, \ldots, d_{i,m} \}$ where $d_{i,j}$ is an edge with endpoints $(u,j)$ and $(v, ((j+i) \text{ mod } m) + 1)$ for each $j \in [m]$.  Let $H_2$ be the graph with vertex set $\bigcup_{v \in V(G)} L_2(v)$ and edges constructed as follows.  Construct edges so that $H_2[L_2(x)]$ is a complete graph for each $x \in V(G)$ and $\bigcup_{e \in E(G)} M_2(e) \subseteq E(H_2)$.     

Now, $\HH_2 = (L_2, H_2, M_2)$ is a full $m$-fold cover of $G$.  Moreover, notice that every $\HH_2$-coloring of $G$ can be constructed in exactly one way via the two step procedure described at the beginning of this proof.  Also by construction, we see that the first step of the procedure can be completed in $P_{DP}(G',m)$ ways, and the second step of the procedure can be completed in $(m-k)$ ways (regardless of how the first step is completed).   It follows that $P_{DP}(G,m) \leq P_{DP}(G, \HH_2) = (m-k)P_{DP}(G',m)$ which means that $P_{DP}(G,m) = (m-k)P_{DP}(G',m)$.  A similar construction can be used to show that $P^*_{DP}(G,m) = (m-1)P^*_{DP}(G',m)$.  
\end{proof}

\section{Deletion-Contraction Relation} \label{main}

In this Section we wish to prove Theorem~\ref{thm: deletioncontraction}.  In order to do this, we need two definitions.  Suppose $G$ is a multigraph with $e \in E_G(u,v)$.  Suppose that $G \cdot e$ is the graph obtained from $G$ by contracting the edge $e$ to the vertex $w$.  Notice that each edge in $G \cdot e$ clearly corresponds to exactly one edge in $E(G) - E_G(u,v)$.  Suppose $\HH = (L,H,M)$ is a full $m$-fold cover of $G$ with $L(x) = \{(x,j): j \in [m]\}$ for each $x \in V(G)$.~\footnote{From this point forward, we will assume that the vertices of an $m$-fold cover are named like this unless otherwise noted.}  

Let $\mathcal{H} - e$ be the full $m$-fold cover for $G-\{e\}$ given by $(L,H',M')$ where $H' = H - M(e)$ and $M'$ is the function obtained by restricting the domain of $M$ to $E(G) - \{e\}$.  Now, let $L'$ be the function on $V(G \cdot e)$ that agrees with $L$ on $V(G) - \{u,v \}$ and maps $w$ to $\{(w,j) : j \in [m]\}$.  Since $\HH$ is a full $m$-fold cover, we know $M(e)$ is a perfect matching between $L(u)$ and $L(v)$.  Suppose that $H''$ is the graph obtained from $H$ by contracting the edge in $M(e)$ with $(u,j)$ as an endpoint to $(w,j)$ for each $j \in [m]$, and then subsequently deleting edges so that $H''[L(w)] = K_m$.  Finally, suppose $M''$ is the function on $E(G \cdot e)$ that maps each $f \in E(G \cdot e)$ that does not have $w$ as an endpoint to $M(f)$.  If $f \in E(G \cdot e)$ has $w$ as an endpoint, suppose it corresponds to the edge $f' \in E(G)$ with~\footnote{Notice that $f' \notin E_G(u,v)$.} $u$ (resp. $v$) as an endpoint.  Then $M''(f)$ is obtained from $M(f')$ by changing the edge in $M(f')$ with $(u,j)$ as an endpoint so that its endpoint $(u,j)$ is replaced with $(w,j)$ for each $j \in [m]$ (resp. $M''(f)$ is obtained from $M(f')$ by changing the edge in $M(f')$ with $(v,j)$ as an endpoint so that its endpoint $(v,j)$ is replaced with $(w,j')$ where $(u,j')$ is the other endpoint of the edge in $M(e)$ with $(v,j)$ as an endpoint for each $j \in [m]$).  Then, $\mathcal{H} \cdot e$ is the full $m$-fold cover for $G$ given by $(L',H'',M'')$. 

We are now ready to prove Theorem~\ref{thm: deletioncontraction} which we restate.

\begin{customthm} {\ref{thm: deletioncontraction}}
Suppose $G$ is a multigraph, $\HH = (L,H,M)$ is a full $m$-fold cover of $G$, and $e$ is an edge in $G$ with endpoints $u$ and $v$.  Then,  
$$P_{DP}(G, \mathcal{H}) \geq P_{DP}(G-e, \mathcal{H} - e) - P_{DP}(G \cdot e, \mathcal{H} \cdot e).$$
Consequently,
$$P_{DP}(G-e,m) - P^*_{DP}(G \cdot e, m ) \leq P_{DP}(G,m).$$
Moreover, if for each $f \in E_G(u,v) - \{e\}$, no edge in $M(f)$ has the same endpoints as any edge in $M(e)$, then
$$P_{DP}(G, \mathcal{H}) = P_{DP}(G-e, \mathcal{H} - e) - P_{DP}(G \cdot e, \mathcal{H} \cdot e).$$
Consequently, when $e_G(u,v) = 1$, 
$$ P^*_{DP}(G,m) \leq P^*_{DP}(G-e,m) - P_{DP}(G \cdot e, m ).$$ 
\end{customthm} 

\begin{proof}
We may assume without loss of generality that the edge in $M(e)$ with $(u,j) \in L(u)$ as an endpoint, has $(v, j)$ as its other endpoint for each $j \in [m]$.  Let $\mathcal{I}$ be the set of all $(\mathcal{H} - e)$-colorings of $G - e$.  Let $\mathcal{I}_j$ be the set of all $(\mathcal{H} - e)$-colorings of $G - e$ that contain $(u,j)$ and $(v,j)$.  Clearly, $\mathcal{I} - \bigcup_{j \in [m]} \mathcal{I}_j$ is the set of all $\mathcal{H}$-colorings of $G$, and the sets $\mathcal{I}_1, \ldots, \mathcal{I}_m$ are pairwise disjoint.

Suppose $w$ is the vertex in $G \cdot e$ obtained from contracting $e$.  Now, let $\mathcal{C}_j$ be the set of all $(\mathcal{H} \cdot e)$-colorings of $G \cdot e$ that contain $(w,j)$.  Note that $ \bigcup_{j \in [m]} \mathcal{C}_j$ is the set of all $(\mathcal{H} \cdot e)$-colorings of $G \cdot e$, and the sets $\mathcal{C}_1 ,\ldots, \mathcal{C}_m$ are pairwise disjoint.  Now, for each $j \in [m]$ such that $\mathcal{I}_j \neq \emptyset$, suppose $B_j : \mathcal{I}_j \rightarrow \mathcal{C}_j$ is given by $B_j(I) = (I - \{(u,j) , (v,j) \}) \cup \{(w,j)\}$.  It is easy to check that $B_j$ is a bijection.  So, $|\mathcal{I}_j| \leq |\mathcal{C}_j|$ for each $j \in [m]$ (with equality holding when $\mathcal{I}_j \neq \emptyset$).  Thus,
\begin{align*}
P_{DP}(G, \mathcal{H}) = \left|\mathcal{I} - \bigcup_{j \in [m]} \mathcal{I}_j \right| = |\mathcal{I}| - \sum_{j \in [m]} |\mathcal{I}_j| &\geq |\mathcal{I}| - \sum_{j \in [m]} |\mathcal{C}_j| \\
 &=|\mathcal{I}| - \left| \bigcup_{j \in [m]} \mathcal{C}_j \right| \\
 &= P_{DP}(G-e, \mathcal{H} - e) - P_{DP}(G \cdot e, \mathcal{H} \cdot e)
\end{align*}
as desired.

Now, suppose that for each $f \in E_G(u,v) - \{e\}$, no edge in $M(f)$ has the same endpoints as any edge in $M(e)$.  We claim that $|\mathcal{I}_j| = |\mathcal{C}_j|$ for each $j \in [m]$.  We already know that this equality holds when $\mathcal{I}_j \neq \emptyset$.  So, suppose that for some $l \in [m]$, $\mathcal{I}_l = \emptyset$.  We will show that $\mathcal{C}_l = \emptyset$.  For the sake of contradiction suppose that $A \in \mathcal{C}_l$.  Let $B = (A - \{(w,l) \}) \cup \{(u,l) , (v,l) \}$.  Since  for each $f \in E_G(u,v) - \{e\}$, no edge in $M(f)$ has the same endpoints as any edge in $M(e)$, we know that $(u,l)$ and $(v,l)$ are not adjacent in the graph corresponding to the second coordinate of the cover $\HH - e$.  This means $B \in \mathcal{I}_l$ which is a contradiction.  So, $\mathcal{C}_l = \emptyset$, and we may conclude $|\mathcal{I}_j| = |\mathcal{C}_j|$ for each $j \in [m]$.  This means that the inequality in the above computation becomes equality, and we obtain $P_{DP}(G, \mathcal{H}) = P_{DP}(G-e, \mathcal{H} - e) - P_{DP}(G \cdot e, \mathcal{H} \cdot e)$ as desired.
\end{proof}

Importantly, the upper bound for $P_{DP}^*(G,m)$ that Theorem~\ref{thm: deletioncontraction} yields in the case $e_G(u,v)=1$ does not hold when $e_G(u,v) \geq 2$.  To see why, suppose that $G=C_2$ and $e \in E(G)$.  Then, by Propositions~\ref{pro: underlying} and~\ref{pro: fattree}, $P_{DP}^*(G,m) = m(m-1)$, $P_{DP}^*(G-e,m) = m(m-1)$, and $P_{DP}(G \cdot e,m) = m$.  So, it is clear that for any $m \in \N$,  $P^*_{DP}(G,m) > P^*_{DP}(G-e,m) - P_{DP}(G \cdot e, m )$.  

It is easy to see from Corollary~\ref{cor: tree} that the lower bound on the DP color function and upper bound on the dual DP color function provided by Theorem~\ref{thm: deletioncontraction} are tight when $G$ is a tree.  We will now show that these bounds are also tight when $G$ is a cycle.  We begin with a definition from~\cite{KM21}.  Suppose $G=C_n$ with $n \geq 3$, and suppose that the vertices of $G$ in cyclic order are $v_1, v_2, \ldots, v_n$.  An $m$-fold cover $\mathcal{H}=(L,H,M)$ is called a \emph{$C_n$-twister}, if it is possible to name the vertices of $H$ so that: $L(v_i) = \{(v_i,l): l \in [m] \}$, $(v_i,l)$ and $(v_{i+1},l)$ are adjacent in $H$ for each $l \in [m]$ and $i \in [n-1]$, $(v_1,1)$ and $(v_n,m)$ are adjacent in $H$, and $(v_n,l)$ and $(v_1,l+1)$ are adjacent in $H$ for each $l \in [m-1]$.

Suppose $k, m \in \N$ and $m \geq 2$.  It is easy to check that if $G_1=C_{2k+2}$ and $\HH_1$ is an $m$-fold $C_{2k+2}$-twister of $G_1$, then $P_{DP}(G_1, \HH_1) = (m-1)^{2k+2} - 1$.  Similarly, if $G_2=C_{2k+1}$ and $\HH_2$ is an $m$-fold $C_{2k+1}$-twister of $G_2$, then $P_{DP}(G_2, \HH_2) = (m-1)^{2k+1} + 1$.  

\begin{pro} \label{pro: cycle}
Suppose $n,m \geq 2$.  If $n$ is even, then $P_{DP}(C_n,m) = (m-1)^n - 1$ and $P_{DP}^*(C_n,m) = P(C_n,m)$.  If $n$ is odd, $P_{DP}(C_n,m) = P(C_n,m)$ and $P_{DP}^*(C_n,m) = (m-1)^n + 1$. 
\end{pro}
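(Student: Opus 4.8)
The plan is to prove all four equalities by induction on $n \geq 2$, carrying both $P_{DP}(C_n,m)$ and $P^*_{DP}(C_n,m)$ through the induction simultaneously, and to obtain each equality by squeezing the relevant quantity between a bound coming from Theorem~\ref{thm: deletioncontraction} and a matching bound coming either from a trivial inequality or from an explicit twister cover. First I would fix an edge $e$ of $C_n$ with endpoints $u,v$ and record the two structural facts I need: $C_n - e$ is the path $P_n$, a tree on $n$ vertices, and $C_n \cdot e = C_{n-1}$ (with the understanding that $C_3 \cdot e = C_2$, the two-vertex multigraph with two parallel edges). Since $C_n$ is simple for $n \geq 3$ we have $e_{C_n}(u,v)=1$, so both the first (unconditional lower-bound) consequence and the fourth (conditional upper-bound) consequence of Theorem~\ref{thm: deletioncontraction} are available. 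I will also use that Corollary~\ref{cor: tree} gives $P_{DP}(P_n,m)=P^*_{DP}(P_n,m)=m(m-1)^{n-1}$, that $P_{DP}\leq P \leq P^*_{DP}$ always, that $P(C_n,m)=(m-1)^n+(-1)^n(m-1)$, and that the twister counts stated just above the proposition exhibit a full $m$-fold cover with $(m-1)^n-1$ colorings when $n$ is even and one with $(m-1)^n+1$ colorings when $n$ is odd.

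For the base case $n=2$ I would not use deletion-contraction at all. Since $C_2$ has underlying graph $K_2$, Proposition~\ref{pro: underlying} gives $P^*_{DP}(C_2,m)=P^*_{DP}(K_2,m)=m(m-1)=P(C_2,m)$, and Proposition~\ref{pro: fattree}, applied with the ordering $v_1,v_2$ so that $d_1=0$ and $d_2=2$, gives $P_{DP}(C_2,m)=m(m-2)=(m-1)^2-1$. These are exactly the even-$n$ formulas.

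For the inductive step ($n \geq 3$) I assume the proposition for $C_{n-1}$, whose parity is opposite to that of $n$. Suppose $n$ is even. For the DP color function, the first consequence gives $P_{DP}(C_n,m)\geq P_{DP}(P_n,m)-P^*_{DP}(C_{n-1},m)$; plugging in the tree value and the odd-index hypothesis $P^*_{DP}(C_{n-1},m)=(m-1)^{n-1}+1$ yields $P_{DP}(C_n,m)\geq (m-1)^n-1$, while the even twister cover gives the reverse inequality since $P_{DP}$ is a minimum, so equality holds. For the dual, $P^*_{DP}(C_n,m)\geq P(C_n,m)$ is trivial, and the fourth consequence $P^*_{DP}(C_n,m)\leq P^*_{DP}(P_n,m)-P_{DP}(C_{n-1},m)$ with $P_{DP}(C_{n-1},m)=P(C_{n-1},m)$ forces $P^*_{DP}(C_n,m)=P(C_n,m)$. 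The odd case is symmetric: the trivial bound $P_{DP}\leq P$ together with the first consequence (now using $P^*_{DP}(C_{n-1},m)=P(C_{n-1},m)$ for even $n-1$) gives $P_{DP}(C_n,m)=P(C_n,m)$, and the fourth consequence with $P_{DP}(C_{n-1},m)=(m-1)^{n-1}-1$ gives $P^*_{DP}(C_n,m)\leq (m-1)^n+1$, which the odd twister cover matches from below since $P^*_{DP}$ is a maximum. The arithmetic in each case is routine, e.g. $m(m-1)^{n-1}-[(m-1)^{n-1}+1]=(m-1)^n-1$.

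There is no deep obstacle here; the care is entirely in the bookkeeping. The hard part will be keeping straight which consequence of Theorem~\ref{thm: deletioncontraction} to invoke for which quantity (the unconditional first consequence for the lower bounds, and the fourth consequence for the upper bounds, legitimate only because $e_{C_n}(u,v)=1$), tracking the parity flip from $n$ to $n-1$ at every step, and noting that exactly two of the four equalities per step require the stated twister count because no trivial inequality supplies the matching bound. The one genuinely delicate point is the $n=3$ step, where $C_3\cdot e=C_2$ is a multigraph rather than a simple cycle, so one must feed in the base-case value $P_{DP}(C_2,m)=(m-1)^2-1$ instead of a cycle formula.
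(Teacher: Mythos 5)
Your proposal is correct and follows essentially the same route as the paper: induction on $n$ with base case $C_2$ handled via Propositions~\ref{pro: underlying} and~\ref{pro: fattree}, the inductive step squeezing $P_{DP}(C_n,m) \leq P^*_{DP}(C_n,m)$ between $m(m-1)^{n-1} - P^*_{DP}(C_{n-1},m)$ and $m(m-1)^{n-1} - P_{DP}(C_{n-1},m)$ via Theorem~\ref{thm: deletioncontraction} and Corollary~\ref{cor: tree}, and closing the remaining gaps with the trivial comparisons to $P(C_n,m)$ and the twister covers. Your explicit flagging of the $C_3 \cdot e = C_2$ multigraph subtlety and of which consequence of the deletion-contraction theorem applies where is sound bookkeeping that the paper handles implicitly.
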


\begin{proof}
The proof is by induction on $n$.  The desired result for $n=2$ follows immediately from Propositions~\ref{pro: underlying} and~\ref{pro: fattree}.  So, suppose that $m \geq 2$, $n \geq 3$, and the desired result holds for all natural numbers greater than 1 and less than $n$.  Suppose that $G = C_n$ and $e \in E(G)$. Notice that $G-e = P_n$ and $G \cdot e = C_{n-1}$.  So, by Corollary~\ref{cor: tree} and Theorem~\ref{thm: deletioncontraction} we have that
$$ m(m-1)^{n-1} - P^*_{DP}(C_{n-1}, m ) \leq P_{DP}(G,m) \leq P^*_{DP}(G,m) \leq m(m-1)^{n-1} - P_{DP}(C_{n-1}, m ).$$
Now, suppose $n$ is odd.  Then, the inductive hypothesis tells us that $P^*_{DP}(C_{n-1}, m) = (m-1)^{n-1} + (m-1)$ and $P_{DP}(C_{n-1}, m) = (m-1)^{n-1} - 1$.  So, we have that 
$$(m-1)^n - (m-1) \leq P_{DP}(G,m) \leq P^*_{DP}(G,m) \leq (m-1)^n + 1.$$
The desired result when $n$ is odd now follows from the fact that  $P_{DP}(G,m) \leq P(C_n,m)$ and $P^*_{DP}(G,m) \geq P_{DP}(G, \HH) = (m-1)^n + 1$ where $\HH$ is an $m$-fold $C_n$-twister of $G$.

Now, suppose that $n$ is even. Then, the inductive hypothesis tells us that $P^*_{DP}(C_{n-1}, m) = (m-1)^{n-1} + 1$ and $P_{DP}(C_{n-1}, m) = (m-1)^{n-1} - (m-1)$.  So, we have that 
$$(m-1)^n - 1 \leq P_{DP}(G,m) \leq P^*_{DP}(G,m) \leq (m-1)^n + (m-1).$$
The desired result when $n$ is even now follows from the fact that  $P^*_{DP}(G,m) \geq P(C_n,m)$ and $P_{DP}(G,m) \leq P_{DP}(G, \HH) = (m-1)^n - 1$ where $\HH$ is an $m$-fold $C_n$-twister of $G$. 
\end{proof}

Now, recall that a unicyclic graph is a connected graph that contains exactly one cycle.  Suppose $G$ is a unicyclic graph on $n$ vertices that contains a $C_l$.  Notice that $G$ can be constructed by starting with the $C_l$ that it contains and then successively adding $n-l$ pendant edges of degree 1.  This fact along with Propositions~\ref{pro: pendant} and~\ref{pro: cycle} immediately yield the following result.

\begin{cor} \label{cor: unicyclic}
Suppose $n, k, m \in \N$ with $m \geq 2$.  If $G$ is a unicyclic graph on $n$ vertices containing a $C_{2k}$, then $P_{DP}(G,m) = (m-1)^n - (m-1)^{n-2k}$ and $P_{DP}^*(G,m) = P(G,m) = (m-1)^n + (m-1)^{n-2k+1}$.  If $G$ is a unicyclic graph on $n$ vertices containing a $C_{2k+1}$ and $m \geq 2$, $P_{DP}(G,m) = P(G,m) = (m-1)^n - (m-1)^{n-2k}$ and $P_{DP}^*(G,m) = (m-1)^n + (m-1)^{n-2k-1}$. 
\end{cor}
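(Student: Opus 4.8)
The plan is to prove both cases by induction on $n - l$, the number of pendant edges, using Proposition~\ref{pro: cycle} as the base case and Proposition~\ref{pro: pendant} for the inductive step. As noted just before the statement, $G$ is obtained from the $C_l$ it contains by successively attaching $n - l$ pendant edges of degree $1$; equivalently, $G$ admits an ordering of its vertices in which the last $n - l$ vertices are each a pendant vertex of degree $1$ at the moment they are removed, and deleting all of them leaves exactly $C_l$. I would peel these vertices off one at a time.

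For the base case $n = l$ we have $G = C_l$, and the formulas are precisely Proposition~\ref{pro: cycle}: when $l = 2k$ is even, $P_{DP}(C_{2k},m) = (m-1)^{2k} - 1$ and $P^*_{DP}(C_{2k},m) = P(C_{2k},m) = (m-1)^{2k} + (m-1)$, while when $l = 2k+1$ is odd, $P_{DP}(C_{2k+1},m) = P(C_{2k+1},m) = (m-1)^{2k+1} - (m-1)$ and $P^*_{DP}(C_{2k+1},m) = (m-1)^{2k+1} + 1$.

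For the inductive step, each attached edge is a pendant edge of degree $k = 1$, so Proposition~\ref{pro: pendant} (valid for all $m \geq 1$, hence for the range $m \geq 2$ in the statement) gives $P_{DP}(G,m) = (m-1)P_{DP}(G',m)$ and $P^*_{DP}(G,m) = (m-1)P^*_{DP}(G',m)$, where $G'$ is $G$ with the pendant vertex removed. Iterating this $n - l$ times multiplies each of $P_{DP}$ and $P^*_{DP}$ for $C_l$ by $(m-1)^{n-l}$. In the even case this yields $P_{DP}(G,m) = (m-1)^{n-2k}\bigl[(m-1)^{2k} - 1\bigr] = (m-1)^n - (m-1)^{n-2k}$ and $P^*_{DP}(G,m) = (m-1)^{n-2k}\bigl[(m-1)^{2k} + (m-1)\bigr] = (m-1)^n + (m-1)^{n-2k+1}$, and the odd case is identical starting from the $C_{2k+1}$ formulas, giving $P_{DP}(G,m) = (m-1)^n - (m-1)^{n-2k}$ and $P^*_{DP}(G,m) = (m-1)^n + (m-1)^{n-2k-1}$.

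To obtain the stated equalities with the chromatic polynomial, I would recall that attaching a pendant edge also multiplies the ordinary chromatic polynomial by $(m-1)$, so $P(G,m) = (m-1)^{n-l}P(C_l,m)$ with $P(C_l,m) = (m-1)^l + (-1)^l(m-1)$; comparing with the expressions just derived shows $P^*_{DP}(G,m) = P(G,m)$ when $l$ is even and $P_{DP}(G,m) = P(G,m)$ when $l$ is odd. There is no serious obstacle here: the only point to monitor is that every attached vertex genuinely has degree $1$, so that the multiplier from Proposition~\ref{pro: pendant} is exactly $(m-1)$ for \emph{both} functions and the exponents $n - 2k$ and $n - 2k \pm 1$ track correctly. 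The result is otherwise a direct bookkeeping consequence of the two cited propositions.
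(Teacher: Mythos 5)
Your proof is correct and is essentially the paper's own argument: the paper likewise observes that $G$ is built from its $C_l$ by successively attaching $n-l$ pendant edges of degree $1$, and then cites Propositions~\ref{pro: pendant} and~\ref{pro: cycle} exactly as you do. The bookkeeping in both parities checks out.
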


It is easy to see that if $m \geq 2$, $G$ is a simple unicyclic graph, and $e$ is an edge of the cycle contained in $G$, then Corollaries~\ref{cor: tree} and~\ref{cor: unicyclic} imply $P_{DP}(G,m) =P_{DP}(G-e,m) - P^*_{DP}(G \cdot e, m )$ and $P^*_{DP}(G,m) = P^*_{DP}(G-e,m) - P_{DP}(G \cdot e, m ).$  Consequently, the lower bound on the DP color function and upper bound on the dual DP color function provided by Theorem~\ref{thm: deletioncontraction} are tight when $G$ is a simple unicyclic graph.~\footnote{In fact, the lower bound on the DP color function in Theorem~\ref{thm: deletioncontraction} is also tight when $G$ is a unicyclic graph containing a $C_2$ and $e$ is an edge of the $C_2$ in $G$.}  

With Questions~\ref{ques: lower} and~\ref{ques: upper} in mind, we are now ready to show that the lower bound on the DP color function and upper bound on the dual DP color function provided by Theorem~\ref{thm: deletioncontraction} may be tight when $G$ is neither a tree nor a simple unicyclic graph.  We begin by showing a family of graphs for which the lower bound on the DP color function is tight.

\begin{pro} \label{pro: fateveng}
Suppose that $G$ is a multigraph with underlying graph $U = C_{2k+2}$.  Suppose that the vertices of $U$ in cyclic order are $v_1, \ldots, v_{2k+2}$.  Also, suppose $e_G(v_i,v_{i+1})=1$ for each $i \in [2k+1]$, $e_G(v_1, v_{2k+2})=l$, and $E_G(v_1, v_{2k+2}) = \{e_1, \ldots, e_l\}$ for some $l \in \N$.  Then, $P_{DP}(G,m)= (m-1)^{2k+1} (m-l) - l$ whenever $m \geq l+1$.
\end{pro}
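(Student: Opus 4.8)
The plan is to induct on $l$, using the deletion-contraction relation of Theorem~\ref{thm: deletioncontraction} for the lower bound and a single explicitly constructed cover for the matching upper bound. Throughout, fix a parallel edge $e = e_l \in E_G(v_1,v_{2k+2})$. Deleting or contracting $e$ produces graphs we already understand: $G-e$ is again a multigraph with underlying graph $C_{2k+2}$ but with $e_G(v_1,v_{2k+2}) = l-1$ (and for $l=1$ it is the simple cycle $C_{2k+2}$), while $G\cdot e = C_{2k+1}$, since identifying $v_1$ with $v_{2k+2}$ turns the path $v_1 v_2 \cdots v_{2k+2}$ into an odd cycle and deletes the remaining $l-1$ parallel edges as loops. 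The base case $l=1$ is just $G=C_{2k+2}$, for which Proposition~\ref{pro: cycle} gives $P_{DP}(C_{2k+2},m)=(m-1)^{2k+2}-1$, matching the claimed formula at $l=1$.

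For the lower bound when $l\geq 2$, I apply the consequence $P_{DP}(G-e,m)-P^*_{DP}(G\cdot e,m)\leq P_{DP}(G,m)$ of Theorem~\ref{thm: deletioncontraction}. By the inductive hypothesis (valid since $m\geq l+1>l$) we have $P_{DP}(G-e,m)=(m-1)^{2k+1}(m-(l-1))-(l-1)$, and by Proposition~\ref{pro: cycle} we have $P^*_{DP}(C_{2k+1},m)=(m-1)^{2k+1}+1$. Subtracting, the telescoping arithmetic collapses to $(m-1)^{2k+1}(m-l)-l$, giving the required lower bound.

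For the upper bound I exhibit one full $m$-fold cover $\HH$ attaining this value. Set $L(v_i)=\{(v_i,j):j\in[m]\}$, give each path edge $v_i v_{i+1}$ the identity matching (so its constraint is $c_i\neq c_{i+1}$), and for $t\in[l]$ let $M(e_t)$ join $(v_1,j)$ to $(v_{2k+2},j')$, where $j'$ is the unique element of $[m]$ congruent to $j+t$ modulo $m$. Since $m\geq l+1$, these $l$ shift matchings are pairwise distinct and none fixes a color, so $\HH$ is a legitimate full $m$-fold cover. An $\HH$-coloring is a tuple $(c_1,\dots,c_{2k+2})$ properly coloring the path $v_1\cdots v_{2k+2}$ and avoiding $c_{2k+2}\equiv c_1+t \pmod m$ for $t\in[l]$; that is, $c_{2k+2}$ must miss a set of $l$ colors all distinct from $c_1$. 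Fixing $c_1=a$, there are $(m-1)^{2k+1}$ proper path colorings, and for each forbidden end-color $b\neq a$ the number of path colorings with $c_1=a$ and $c_{2k+2}=b$ is, by color-symmetry, $P(C_{2k+2},m)/(m(m-1))=((m-1)^{2k+1}+1)/m$. Summing over $a$ gives $m(m-1)^{2k+1}-l((m-1)^{2k+1}+1)=(m-1)^{2k+1}(m-l)-l$, so $P_{DP}(G,m)\leq P_{DP}(G,\HH)=(m-1)^{2k+1}(m-l)-l$, which meets the lower bound.

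I expect the upper-bound count to be the main obstacle: one must arrange the $l$ parallel matchings so that the forbidden end-colors are exactly $l$ distinct colors, none equal to $c_1$ (this is precisely where the hypothesis $m\geq l+1$ enters, guaranteeing the shifts are fixed-point-free and pairwise distinct), and then evaluate the restricted enumeration. The clean numerical input for that evaluation is the per-pair path-coloring count $((m-1)^{2k+1}+1)/m$, which I would read off from the chromatic polynomial of $C_{2k+2}$ rather than from a transfer-matrix computation.
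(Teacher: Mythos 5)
Your proof is correct and takes essentially the same route as the paper: the same induction on $l$ using the deletion-contraction lower bound $P_{DP}(G-e,m)-P^*_{DP}(G\cdot e,m)\leq P_{DP}(G,m)$ together with Proposition~\ref{pro: cycle}, and the same extremal cover for the upper bound (canonical matchings on the path edges plus $l$ distinct fixed-point-free cyclic shifts on the parallel edges), evaluated via the same symmetry count $P(C_{2k+2},m)/(m(m-1))$. The only differences are cosmetic (you delete $e_l$ where the paper deletes $e_1$, and you organize the final sum by fixing the color of $v_1$ first).
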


Before we begin the proof, one should note that this Proposition implies $P_{DP}(G,m) = P_{DP}(G-e_1,m) - P_{DP}^* (G \cdot e_1, m)$ whenever $m \geq l+1$.

\begin{proof}
We will first prove that $P_{DP}(G,m) \leq (m-1)^{2k+1} (m-l) - l$ whenever $m \geq l+1$. Let $G' = G - E_G(v_1, v_{2k+2})$ (note that $G'=P_{2k+2}$).  For any $m \geq l+1$, suppose $\HH' = (L,H',M')$ is an $m$-fold cover of $G'$ with a canonical labeling.  For each $i \in [l]$, let $A_i = \{ (v_1,j)(v_{2k+2},((j+i-1) \text{ mod } m)+1) : j \in [m] \}$.  Let $H$ be the simple graph obtained from $H'$ by adding the edges in $\bigcup_{i=1}^l A_i$.  Let $M$ be the function on $E(G)$ that agrees with $M'$ on $E(G')$ and maps $e_i$ to $A_i$ for each $i \in [l]$.  Now, $\HH = (L,H,M)$ is a full $m$-fold cover of $G$.  We wish to compute $P_{DP}(G, \HH)$.  For each $(i,j) \in [m]^2$, let $C_{(i,j)}$ be the set of proper $m$-colorings of $G$ that color $v_1$ with $i$ and $v_{2k+2}$ with $j$.  By construction, if $P= \{(i,j) \in [m]^2 : ((j-i) \text{ mod } m) \in [l] \}$,
$$P_{DP}(G, \HH) = P_{DP}(G', \HH') - \sum_{(i,j) \in P} |C_{(i,j)}| = P(G',m) - \sum_{(i,j) \in P} |C_{(i,j)}|.$$
Since $m \geq l+1$, we have that if $(i,j) \in P$, then $i \neq j$.  Consequently, $\sum_{(i,j) \in P} |C_{(i,j)}| = lm P(C_{2k+2},m)/(m(m-1)) = l(m-1)^{2k+1} + l$.  So, 
$$P_{DP}(G,m) \leq P_{DP}(G,\HH) = m(m-1)^{2k+1} - \left ( l(m-1)^{2k+1} + l\right) = (m-1)^{2k+1} (m-l) - l.$$

We will now prove that $P_{DP}(G,m)= (m-1)^{2k+1} (m-l) - l$ whenever $m \geq l+1$ by induction on $l$.  Notice that when $l=1$ we have the desired result by Proposition~\ref{pro: cycle}.  So, suppose that $l \geq 2$ and the desired result holds for all natural numbers less than $l$.  By the induction hypothesis, for any $m \geq l+1$, $P_{DP}(G-e_1,m) = (m-1)^{2k+1} (m-l+1) - l+1$.  This along with Theorem~\ref{thm: deletioncontraction} and Proposition~\ref{pro: cycle} yields
\begin{align*}
P_{DP}(G,m) &\geq P_{DP}(G-e_1,m) - P_{DP}^* (G \cdot e_1, m) \\
 &=(m-1)^{2k+1} (m-l+1) - l+1 - P_{DP}^*(C_{2k+1},m) \\
&= (m-1)^{2k+1} (m-l+1) - l+1 - \left( (m-1)^{2k+1} + 1 \right) = (m-1)^{2k+1} (m-l) - l.
\end{align*} 
This lower bound along with the upper bound established in the first part of this proof completes the induction step.
\end{proof}

Finally, we provide an example of a family of graphs for which the upper bound on the dual DP color function in Theorem~\ref{thm: deletioncontraction} is tight.  For this example, we need a result~\footnote{The result in~\cite{BH21} is only for simple graphs, but it is easy to extend it to 2-cycles by Proposition~\ref{pro: pendant}.} from~\cite{BH21}.

\begin{pro} [\cite{BH21}] \label{pro: glue}
Suppose $k,l \in \N$, $G_1 = C_{2k}$, $G_2 = C_{2l}$, and $|V(G_1) \cap V(G_2)|=1$.  If $G$ is the graph with vertex set $V(G_1) \cup V(G_2)$ and edge set $E(G_1) \cup E(G_2)$, then
$$P_{DP}(G,m) = \frac{P_{DP}(G_1,m)P_{DP}(G_2,m)}{m} = \frac{((m-1)^{2k}-1)((m-1)^{2l}-1)}{m}$$
whenever $m \geq 2$.
\end{pro}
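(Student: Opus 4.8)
The second equality is immediate from Proposition~\ref{pro: cycle}, which gives $P_{DP}(C_{2k},m)=(m-1)^{2k}-1$, so the entire content is the first equality. Let $w$ be the unique vertex shared by $G_1$ and $G_2$. Since $w$ is a cut vertex of $G$ and $E(G)=E(G_1)\cup E(G_2)$, any full $m$-fold cover $\HH=(L,H,M)$ of $G$ restricts to full $m$-fold covers $\HH_1$ of $G_1$ and $\HH_2$ of $G_2$ sharing the fiber $L(w)=\{(w,1),\dots,(w,m)\}$, and an $\HH$-coloring is exactly a pair consisting of an $\HH_1$-coloring and an $\HH_2$-coloring that agree at $w$. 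Writing $a_j$ (resp.\ $b_j$) for the number of $\HH_1$-colorings (resp.\ $\HH_2$-colorings) containing $(w,j)$, the first thing I would record is the product-over-the-cut formula
\[
P_{DP}(G,\HH)=\sum_{j=1}^m a_j b_j .
\]

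The crux is a precise understanding of the $a_j$ for an even cycle. Label the vertices of $G_1$ cyclically as $w=x_0,x_1,\dots,x_{2k-1}$ and, for each edge, let $P_i$ be the permutation of $[m]$ recording the perfect matching $M$ across that edge (the unique conflict partner of each color). Using the compatibility matrix $J-P_i$ of each edge, I would express $a_j$ as the $(j,j)$ entry of the product $\prod_i(J-P_i)$ taken around the cycle, so that $\sum_j a_j=\operatorname{tr}\prod_i(J-P_i)$ is the total number of $\HH_1$-colorings. Expanding this product and repeatedly applying the identities $JP_i=P_iJ=J$ and $J^2=mJ$, every term containing at least one factor $J$ collapses to a scalar multiple of $J$ that is \emph{independent} of which permutations occur, while the single all-$P$ term contributes the monodromy permutation $P:=P_1P_2\cdots P_{2k}$. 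Setting $A:=\frac{(m-1)^{2k}-1}{m}$ and $B:=\frac{(m-1)^{2l}-1}{m}$ (integers since $2k,2l$ are even), the identity I am aiming for is
\[
a_j=\frac{(m-1)^{2k}-1}{m}+\varepsilon_j,\qquad \varepsilon_j=\begin{cases}1,& P(j)=j,\\ 0,&\text{otherwise,}\end{cases}
\]
together with the analogue $b_j=B+\delta_j$ for $G_2$ and its monodromy $Q$. (This recovers $\sum_j a_j=(m-1)^{2k}-1+\operatorname{tr}(P)$, matching Proposition~\ref{pro: cycle}, and shows each $a_j\in\{A,A+1\}$.)

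With this in hand both bounds fall out of the cut formula. For \emph{every} full cover $\HH$,
\[
P_{DP}(G,\HH)=\sum_{j=1}^m (A+\varepsilon_j)(B+\delta_j)=mAB+A\operatorname{tr}(Q)+B\operatorname{tr}(P)+\bigl|\{\,j:P(j)=Q(j)=j\,\}\bigr|\ \ge\ mAB,
\]
since $A,B\ge 0$ for $m\ge2$ and all remaining terms are nonnegative; hence $P_{DP}(G,m)\ge mAB$. For the matching upper bound I would glue two twister covers at $w$: the $C_{2k}$-twister and $C_{2l}$-twister each have monodromy equal to a cyclic shift of $[m]$, which is fixed-point-free for $m\ge2$, so $\operatorname{tr}(P)=\operatorname{tr}(Q)=0$ and the displayed equation yields $P_{DP}(G,\HH)=mAB$, whence $P_{DP}(G,m)\le mAB$. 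Combining gives $P_{DP}(G,m)=mAB=\frac{P_{DP}(G_1,m)P_{DP}(G_2,m)}{m}$.

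The main obstacle is the transfer-matrix identity for $a_j$: getting the bookkeeping of the expansion of $\prod_i(J-P_i)$ exactly right (that every mixed term is a permutation-independent multiple of $J$, isolating the monodromy only in the all-$P$ term) is where the real work lies, and it is what pins down the ``$+\varepsilon_j$'' correction that drives both inequalities. A secondary point to check is the boundary case $k=1$ or $l=1$, where $C_2$ is a genuine multigraph and a single parallel pair no longer contributes a plain factor $J-P_i$; there I would fall back on Propositions~\ref{pro: underlying} and~\ref{pro: pendant} (as the footnote suggests) to reduce to the simple even-cycle computation above.
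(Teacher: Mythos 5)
Your argument is correct, but it cannot be compared line-by-line with ``the paper's proof'' because the paper offers none: Proposition~\ref{pro: glue} is imported from~\cite{BH21} as a black box (with a footnote remarking that the $C_2$ case follows from Proposition~\ref{pro: pendant}). What you have written is a self-contained derivation, and the two nontrivial steps both check out. The cut-vertex formula $P_{DP}(G,\HH)=\sum_{j}a_jb_j$ is valid because $E(G)=E(G_1)\cup E(G_2)$ and $|V(G_1)\cap V(G_2)|=1$ force the cover graph $H$ to have no edges between fibers of $V(G_1)\setminus\{w\}$ and $V(G_2)\setminus\{w\}$, and the restrictions $\HH_1,\HH_2$ are again full covers. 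The transfer-matrix identity also works exactly as you sketch: in the expansion of $\prod_{i=1}^{2k}(J-P_i)$, a term with $r\geq 1$ copies of $J$ collapses via $P_iJ=JP_i=J$ and $J^2=mJ$ to $(-1)^{2k-r}m^{r-1}J$, so the whole product equals $P+\tfrac{(m-1)^{2k}-1}{m}J$ with $P$ the monodromy, giving $a_j=A+\varepsilon_j$ as claimed; the lower bound $P_{DP}(G,\HH)\geq mAB$ and the matching upper bound from gluing two twisters (whose monodromies are fixed-point-free cyclic shifts for $m\geq 2$) then close the argument. Two small remarks: for the boundary case $k=1$ or $l=1$ you do not need Proposition~\ref{pro: underlying} at all --- the extra vertex of the $C_2$ is a pendant vertex of degree $2$ in $G$, so Proposition~\ref{pro: pendant} gives $P_{DP}(G,m)=(m-2)P_{DP}(G_2,m)=((m-1)^2-1)((m-1)^{2l}-1)/m$ directly; and your computation in fact proves slightly more than the statement, namely the general inequality $P_{DP}(G,\HH)\geq P_{DP}(G_1,m)P_{DP}(G_2,m)/m$ for \emph{every} full cover of a one-point union of even cycles, which is in the spirit of the vertex-gluing results of~\cite{BH21}.
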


\begin{pro} \label{pro: twoodd}
Suppose $k,l \in \N$, $G_1 = C_{2k+1}$, $G_2 = C_{2l+1}$, $|V(G_1) \cap V(G_2)|=2$, and $E(G_1) \cap E(G_2)= \{uv\}$.  If $G$ is the graph with vertex set $V(G_1) \cup V(G_2)$ and edge set $E(G_1) \cup E(G_2)$, then for each $m \geq 2$, $P_{DP}^*(G,m) = (m-1)^{2k+2l} + (m-1) - ((m-1)^{2k}-1)((m-1)^{2l}-1)/m$.  Consequently, $P_{DP}^*(G,m) = P_{DP}^* (G - uv,m) - P_{DP}(G \cdot uv,m)$
\end{pro}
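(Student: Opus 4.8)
Write $V=(m-1)^{2k+2l}+(m-1)-\big((m-1)^{2k}-1\big)\big((m-1)^{2l}-1\big)/m$ for the target value. The plan is to sandwich $P^*_{DP}(G,m)$ between matching upper and lower bounds equal to $V$. The first step is to identify the two auxiliary graphs. Since $e_G(u,v)=1$ and each of $G_1,G_2$ is an odd cycle through $uv$, deleting $uv$ joins the two $u$--$v$ arcs (of lengths $2k$ and $2l$) into a single even cycle, so $G-uv=C_{2k+2l}$; contracting $uv$ turns $G_1$ into $C_{2k}$ and $G_2$ into $C_{2l}$, which then meet only in the vertex $w$ obtained from $\{u,v\}$, so $G\cdot uv$ is exactly the one-vertex gluing of $C_{2k}$ and $C_{2l}$ treated in Proposition~\ref{pro: glue}. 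Hence Proposition~\ref{pro: cycle} gives $P^*_{DP}(G-uv,m)=P(C_{2k+2l},m)=(m-1)^{2k+2l}+(m-1)$ and Proposition~\ref{pro: glue} gives $P_{DP}(G\cdot uv,m)=\big((m-1)^{2k}-1\big)\big((m-1)^{2l}-1\big)/m$, whose difference is $V$. Because $e_G(u,v)=1$, the last consequence of Theorem~\ref{thm: deletioncontraction} applied to $e=uv$ immediately yields the upper bound $P^*_{DP}(G,m)\le P^*_{DP}(G-uv,m)-P_{DP}(G\cdot uv,m)=V$.

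For the matching lower bound I would exhibit a single full $m$-fold cover $\HH=(L,H,M)$ of $G$ with $P_{DP}(G,\HH)=V$. Take $\HH-uv$ to be the canonical cover of the even cycle $G-uv=C_{2k+2l}$ (identity matchings on every arc edge), and let $M(uv)$ be a fixed-point-free matching, for instance the twist $(u,j)\sim(v,\sigma(j))$ with $\sigma(j)=(j \bmod m)+1$. Under the canonical part the $(\HH-uv)$-colorings of $G-uv$ correspond bijectively to the proper $m$-colorings $c$ of $C_{2k+2l}$, and an $\HH$-coloring of $G$ is exactly such a $c$ with $c(v)\neq\sigma(c(u))$. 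Thus $P_{DP}(G,\HH)=P(C_{2k+2l},m)-R$, where $R$ counts the proper colorings with $c(v)=\sigma(c(u))$. Since $\sigma$ has no fixed point, every removed coloring has $c(u)\neq c(v)$, and by the colour-permutation symmetry of the canonical cover the number $N(i,j)$ of proper colorings with $c(u)=i,\ c(v)=j$ depends only on whether $i=j$; writing $\beta$ for the common off-diagonal value gives $R=m\beta$. Finally $\beta$ factors over the two independent $u$--$v$ arcs as $\beta=b_{2k}b_{2l}$, where $b_t=\big((m-1)^t-(-1)^t\big)/m$ is the number of proper colorings of a $t$-edge path with prescribed distinct endpoint colors; for the even lengths $2k,2l$ this yields $R=\big((m-1)^{2k}-1\big)\big((m-1)^{2l}-1\big)/m$ and hence $P_{DP}(G,\HH)=V$, so $P^*_{DP}(G,m)\ge V$.

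Combining the two bounds gives $P^*_{DP}(G,m)=V$, and substituting the values from Propositions~\ref{pro: cycle} and~\ref{pro: glue} gives the ``consequently'' statement $P^*_{DP}(G,m)=P^*_{DP}(G-uv,m)-P_{DP}(G\cdot uv,m)$. The main obstacle is the lower-bound count: one must choose the matching so that it removes only colorings with distinct colors on $u$ and $v$ (which forces a fixed-point-free matching) and then evaluate the off-diagonal count $\beta$ via the path-coloring numbers $b_{2k},b_{2l}$ — this is the one genuinely computational input, and it is exactly what makes the constructed cover meet the upper bound. As a consistency check, note that via the bijection in the proof of Theorem~\ref{thm: deletioncontraction} the quantity $R$ equals $P_{DP}(G\cdot uv,\HH\cdot uv)$, and one can verify directly that $\HH\cdot uv$ restricts to a single-twist, hence minimizing, cover on each of $C_{2k}$ and $C_{2l}$.
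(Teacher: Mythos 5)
Your proposal is correct and follows essentially the same route as the paper: the upper bound comes from the $e_G(u,v)=1$ case of Theorem~\ref{thm: deletioncontraction} together with Propositions~\ref{pro: cycle} and~\ref{pro: glue}, and the matching lower bound comes from the same cover (canonical labeling on $C_{2k+2l}$ plus a single twist on $uv$). The only cosmetic difference is that you evaluate the off-diagonal count $\beta$ via path-coloring numbers $b_{2k}b_{2l}$, whereas the paper gets the same quantity as $P(C_{2k+1},m)P(C_{2l+1},m)/(m^2(m-1)^2)$ using the edge-gluing formula; these agree.
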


\begin{proof}
Let $G' = G - uv$ (note that $G'=C_{2k+2l}$).  For any $m \geq 2$, suppose $\HH' = (L,H',M')$ is an $m$-fold cover of $G'$ with a canonical labeling.  Let $H$ be the graph obtained from $H'$ by adding the edges $(u,j)(v,j+1)$ for each $j \in [m-1]$ and the edge $(u,m)(v,1)$.  Let $M$ be the function on $E(G)$ that agrees with $M'$ on $E(G')$ and maps $uv$ to $E_{H}(L(u),L(v))$.  Now, $\HH = (L,H,M)$ is a full $m$-fold cover of $G$.  We wish to compute $P_{DP}(G, \HH)$.  For each $(i,j) \in [m]^2$, let $C_{(i,j)}$ be the set of proper $m$-colorings of $G$ that color $u$ with $i$ and $v$ with $j$.  By construction,
\begin{align*}
P_{DP}(G, \HH) &= P_{DP}(G', \HH') - |C_{(m,1)}| - \sum_{j=1}^{m-1} |C_{(j,j+1)}| \\
&= P(C_{2k+2l},m) - m \left ( \frac{P(C_{2k+1},m)P(C_{2l+1},m)}{m^2(m-1)^2} \right) \\
&= (m-1)^{2k+2l} + (m-1) - \frac{((m-1)^{2k}-1)((m-1)^{2l}-1)}{m}.
\end{align*}
So, $(m-1)^{2k+2l} + (m-1) - ((m-1)^{2k}-1)((m-1)^{2l}-1)/m \leq P_{DP}^*(G,m)$.  Now, by Theorem~\ref{thm: deletioncontraction} we know that for each $m \geq 2$, $P_{DP}^*(G,m) \leq P_{DP}^* (G - uv,m) - P_{DP}(G \cdot uv, m)$.  By Proposition~\ref{pro: cycle}, we know that $P_{DP}^* (G - uv,m) = P(C_{2k+2l},m) = (m-1)^{2k+2l} + (m-1)$.  By Proposition~\ref{pro: glue}, we know that $P_{DP}(G \cdot uv,m) = ((m-1)^{2k}-1)((m-1)^{2l}-1)/m$.  This completes the proof. 
\end{proof} 

\section{Asymptotics of the Dual DP Color Function} \label{dual}

In this Section we wish to prove Theorem~\ref{thm: dual} which we restate.

\begin{customthm} {\ref{thm: dual}}
Suppose $g$ is an odd integer with $g \geq 3$.  If $G$ is a simple graph on $n$ vertices with girth at least $g-1$, then $P^*_{DP}(G, m) - P(G,m) = O(m^{n-g+1})$ as $m \rightarrow \infty$. 
\end{customthm}

Our main tools for proving this Theorem are: Theorem~\ref{thm: general}, our deletion-contraction relation, and the following generalization of Theorem~\ref{thm: general} to multigraphs.  Theorem~\ref{thm: easy} is an essential tool in our proof of Theorem~\ref{thm: dual} since contracting an edge in a simple graph of girth 3 can produce a multigraph, and Theorem~\ref{thm: general} does not apply to multigraphs.

\begin{thm} \label{thm: easy}
For any multigraph $G$ on $n$ vertices, $P(G,m) - P_{DP}(G,m) = O(m^{n-1})$ as $m \rightarrow \infty$.
\end{thm}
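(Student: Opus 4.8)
The plan is to sandwich $P_{DP}(G,m)$ between two monic polynomials of degree $n$ in $m$, so that the difference $P(G,m)-P_{DP}(G,m)$ is forced into a polynomial of degree at most $n-1$. The upper bound is free: the paper already records $P_{DP}(G,m)\le P_\ell(G,m)\le P(G,m)$, so $P(G,m)-P_{DP}(G,m)\ge 0$. For the lower bound I would apply Proposition~\ref{pro: order}. Fix any ordering $v_1,\ldots,v_n$ of $V(G)$, and for each $i$ let $d_i$ be the number of edges joining $v_i$ to a predecessor; then for all $m\ge D=\max_i d_i$ we have $P_{DP}(G,m)\ge \prod_{i=1}^n (m-d_i)$.

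Next I would observe that both bounding functions are monic of degree $n$ in $m$. The product $\prod_{i=1}^n(m-d_i)$ is manifestly a monic degree-$n$ polynomial. For the chromatic polynomial, since parallel edges never affect whether two vertices receive distinct colors, $P(G,m)=P(U,m)$, where $U$ is the underlying simple graph of $G$ on the same $n$ vertices; hence $P(G,m)$ is the chromatic polynomial of a simple graph on $n$ vertices and is therefore monic of degree $n$. Consequently the $m^n$ terms of $P(G,m)$ and of $\prod_{i=1}^n(m-d_i)$ cancel, and $P(G,m)-\prod_{i=1}^n(m-d_i)$ is a polynomial in $m$ of degree at most $n-1$. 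Combining the two bounds, for every $m\ge D$ we get
$$0\le P(G,m)-P_{DP}(G,m)\le P(G,m)-\prod_{i=1}^n(m-d_i)=O(m^{n-1}),$$
and since the claim concerns only the limit $m\to\infty$, discarding the finitely many $m<D$ is harmless. This yields $P(G,m)-P_{DP}(G,m)=O(m^{n-1})$ as desired.

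There is no deep obstacle here; the one point to keep in mind is that we do \emph{not} need the $m^{n-1}$ coefficients of the two bounding polynomials to match. Indeed $\prod_{i=1}^n(m-d_i)$ has $m^{n-1}$-coefficient $-\sum_i d_i=-\abs{E(G)}$ (counting parallel edges), whereas $P(G,m)$ has $m^{n-1}$-coefficient $-\abs{E(U)}$ (counting only the underlying edges), and these differ precisely when $G$ has parallel edges. It is enough that both functions are monic of degree $n$, so that their difference is $O(m^{n-1})$. The only step demanding care in the multigraph setting is the identity $P(G,m)=P(U,m)$, which is exactly what guarantees that $P(G,m)$ remains a monic degree-$n$ polynomial in the presence of parallel edges.
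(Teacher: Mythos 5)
Your proposal is correct and follows essentially the same route as the paper: bound $P_{DP}(G,m)$ below by $\prod_{i=1}^n(m-d_i)$ via Proposition~\ref{pro: order} and note that this product and $P(G,m)$ are both monic of degree $n$, so their difference is $O(m^{n-1})$. The only differences are cosmetic --- you justify that $P(G,m)$ is monic of degree $n$ for a multigraph via $P(G,m)=P(U,m)$ where the paper simply cites the standard fact, and you omit the (trivial) edgeless case that the paper treats separately before invoking Proposition~\ref{pro: order}.
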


\begin{proof}
When $G$ has no edges the result is obvious since $P(G,m) = P_{DP}(G,m)$ for all $m \in \N$ when $G$ is edgeless.  So, suppose that $G$ has at least one edge and $v_1, \ldots, v_n$ is an ordering of the elements of $V(G)$ such that there are precisely $d_i$ edges with $v_i$ as an endpoint and some vertex preceding $v_i$ in the ordering as the other endpoint for each $i \in [n]$. Let $D = \max_{i \in [n]} d_i$.  Now, by Proposition~\ref{pro: order}, we know that for any $m \geq D$
$$ P(G,m) - P_{DP}(G,m) \leq P(G,m) - \prod_{i=1}^n (m-d_i).$$
Since it is well-known (see e.g.,~{W01}) that the leading term of $P(G,m)$ is $m^n$, and the leading term of $\prod_{i=1}^n (m-d_i)$ is $m^n$, the result follows. 
\end{proof}

We are now ready to present our proof of Theorem~\ref{thm: dual}.

\begin{proof}
Suppose $g$ is a fixed odd integer with $g \geq 3$.  We will prove the desired result by induction on $|E(G)|$.  Suppose that $G$ is a simple graph on $n$ vertices with girth at least $g-1$.  The result is clear when $|E(G)| \in [g-2]$ since $G$ is a tree in this case and $P^*_{DP}(G, m) = P(G,m)$ for all $m \in \N$ by Corollary~\ref{cor: tree}.  So, suppose that $|E(G)| \geq g-1$ and the result holds for natural numbers less than $|E(G)|$.  We may suppose that $G$ contains a cycle since the result is clear otherwise.  Suppose $e$ is an edge in $G$ that is contained in a cycle.  Notice that in the case $g \geq 5$, $G \cdot e$ is a simple graph on $n-1$ vertices with girth at least $g-2$.  In the case that $g=3$, $G \cdot e$ is a multigraph on $n-1$ vertices.  Also, $G - e$ is a graph on $n$ vertices with girth at least $g-1$.  By the Deletion-Contraction Formula and Theorem~\ref{thm: deletioncontraction}, we see that for any $m \in \N$
\begin{align*}
P^*_{DP}(G,m) - P(G,m) &\leq P^*_{DP}(G-e,m) - P_{DP}(G \cdot e, m ) - P(G,m) \\
&= P^*_{DP}(G-e,m) - P(G-e,m) +  P(G \cdot e, m ) - P_{DP}(G \cdot e, m ).
\end{align*}
By the inductive hypothesis and Theorem~\ref{thm: general} when $g \geq 5$ (resp. Theorem~\ref{thm: easy} when $g=3$), we know there are $N_1, N_2 \in \N$ along with constants $C_1$ and $C_2$ such that $P^*_{DP}(G-e,m) - P(G-e,m) \leq C_1 m^{n-g+1}$ whenever $m \geq N_1$ and  $P(G \cdot e, m ) - P_{DP}(G \cdot e, m ) \leq C_2 m ^{n-1 - (g-2)}$ whenever $m \geq N_2$.  So, if we let $N = \max \{N_1,N_2\}$, we have that
$$P^*_{DP}(G,m) - P(G,m) \leq (C_1 + C_2) m^{n-g+1}$$
whenever $m \geq N$.  It follows that $P^*_{DP}(G, m) - P(G,m) = O(m^{n-g+1})$ as $m \rightarrow \infty$.
\end{proof}

By Proposition~\ref{pro: cycle} we know that for each $k \in \N$ $P^*_{DP}(C_{2k+1}, m) - P(C_{2k+1},m) = \Theta(m)$ as $m \rightarrow \infty$ which demonstrates the tightness of Theorem~\ref{thm: dual} for all possible $g$.
\\

{\bf Acknowledgment.}  The author would like to thank Hemanshu Kaul for many helpful conversations regarding the results of this paper.


\begin{thebibliography}{99}
{\small

\bibitem{BH21} J. Becker, J. Hewitt, H. Kaul, M. Maxfield, J. Mudrock, D. Spivey, S. Thomason, and T. Wagstrom, The DP Color Function of Joins and Vertex-Gluings of Graphs, arXiv: 2104.12268 (preprint), 2021.

\bibitem{B17} A. Bernshteyn, The Johansson-Molloy Theorem for DP-coloring, \emph{Random Structures \& Algorithms} 54:4 (2019), 653-664.

\bibitem{BK182} A. Bernshteyn and A. Kostochka, Sharp Dirac's theorem for DP-critical graphs, \emph{Journal of Graph Theory} 88 (2018) 521-546.

\bibitem{BKP17} A. Bernshteyn, A. Kostochka, and S.P. Pron, On DP-coloring of graphs and multigraphs, \emph{Siberian Mathematical Journal} 58 (2017), 28-36.

\bibitem{B12} G. D. Birkhoff, A determinant formula for the number of ways of coloring a map, \emph{The Annals of Mathematics}, 14 (1912), 42-46.

\bibitem{BS40} R. L. Brooks, C. A. B. Smith, A. H. Stone, and W. T. Tutte, The dissection of rectangles into squares, \emph{Duke Math. J.} 7(1) (1940), 312-340.

\bibitem{D92} Q. Donner, On the number of list-colorings, \emph{J. Graph Theory} 16 (1992), 239-245.

\bibitem{DY21} F. Dong, Y. Yang, Compare DP color functions with chromatic polynomials, arxiv: 2105.11081 (preprint), 2021.

\bibitem{DP15} Z. Dvo\v{r}\'{a}k and L. Postle, Correspondence coloring and its application to list-coloring planar graphs wihtout cycles of lengths 4 to 8, \emph{Journal of Combinatorial Theory Series B} 129 (2018), 38-54.

\bibitem{ET79} P. Erd\H{o}s, A. L. Rubin, and H. Taylor, Choosability in graphs, \emph{Congressus Numerantium} 26 (1979), 125-127.

\bibitem{HK21} C. Halberg, H. Kaul, A. Liu, J. A. Mudrock, P. Shin, and S. Thomason, On polynomial representations of the DP color function: theta graphs and their generalizations, arxiv:2012.12897 (preprint), 2020.

\bibitem{KM19} H. Kaul and J. Mudrock, On the chromatic polynomial and counting DP-colorings of graphs, \emph{Advances in Applied Mathematics} 123 (2021), article 103121.

\bibitem{KM20} H. Kaul and J. Mudrock, Combinatorial Nullstellensatz and DP-coloring of Graphs, \emph{Discrete Mathematics} 343 (2020), article 112115.

\bibitem{KM21} H. Kaul, J. Mudrock, G. Sharma, Q. Stratton, DP-coloring the Cartesian Products of Graphs, \emph{in preparation}.

\bibitem{KO18} S-J. Kim and K. Ozeki, A note on a Brooks' type theorem for DP-coloring, \emph{Journal of Graph Theory} 91(2) (2019), 148-161.

\bibitem{KN16} R. Kirov and R. Naimi, List coloring and $n$-monophilic graphs, \emph{Ars Combinatoria} 124 (2016), 329-340.

\bibitem{AS90} A. V. Kostochka and A. Sidorenko, Problem Session of the Prachatice Conference on Graph Theory, \emph{Fourth Czechoslovak Symposium on Combinatorics, Graphs and Complexity}, Ann. Discrete Math. 51 (1992), 380.

\bibitem{LL19} R. Liu and X. Li., Every planar graph without 4-cycles adjacent to two triangles is DP-4-colorable, \emph{Discrete Mathematics} 342 (2019), 623-627.

\bibitem{LLYY19} R. Liu, S. Loeb, Y. Yin, and G. Yu, DP-3-coloring of some planar graphs, \emph{Discrete Mathematics} 342 (2019), 178-189.

\bibitem{Mo18} M. Molloy, Asymptotically good edge correspondence colouring, arXiv:1808.08594 (preprint), 2018.

\bibitem{M18} J. Mudrock, A note on the DP-chromatic number of complete bipartite graphs, \emph{Discrete Mathematics} 341 (2018) 3148-3151.

\bibitem{MT20} J. Mudrock and S. Thomason, Answers to two questions on the DP color function, \emph{Elect. Journal of Combinatorics}, 28 (2021), P2.24.

\bibitem{T09} C. Thomassen, The chromatic polynomial and list colorings, \emph{Journal of Combinatorial Theory Series B} 99 (2009), 474-479.

\bibitem{V76} V. G. Vizing, Coloring the vertices of a graph in prescribed colors, \emph{Diskret. Analiz.} no. 29, \emph{Metody Diskret. Anal. v Teorii Kodovi Skhem} 101 (1976), 3-10.

\bibitem{WQ17} W. Wang, J. Qian, and Z. Yan, When does the list-coloring function of a graph equal its chromatic polynomial, \emph{Journal of Combinatorial Theory Series B} 122 (2017) 543-549.

\bibitem{W01} D. B. West, (2001) \emph{Introduction to Graph Theory}.  Upper Saddle River, NJ: Prentice Hall.

}

\end{thebibliography}
\end{document}